\newtheorem{theorem}{Theorem}[section]
\newtheorem*{theorem*}{Theorem}{\,}
\newtheorem{corollary}{Corollary}[section]
\newtheorem{lemma}{Lemma}[section]
\newtheorem*{definition*}{Definition}{}
\theoremstyle{definition}
\newtheorem{example}{Example}[section]
\newtheorem{remark}{Remark}[section]
\numberwithin{equation}{section}
\def\cprime{$'$}
\newcommand{\ind}{\Upsilon}
\newcommand{\smc}{\mathscr{H}}
\newcommand{\sad}{\operatorname{s}}
\newcommand{\dl}{\ell}
\newcommand{\rice}{A}
\newcommand{\scurv}{\mathscr{K}}
\newcommand{\hcurv}{\mathscr{H}}
\newcommand{\rf}{\rho}
\newcommand{\W}{\mathscr{W}}
\newcommand{\Z}{\mathscr{Z}}
\newcommand{\symcon}{\mathbb{S}}
\newcommand{\Om}{\Omega}
\newcommand{\blD}{\mathbb{D}}
\newcommand{\sflat}{\curlyvee}
\newcommand{\om}{\omega}
\newcommand{\lf}{\mathsf{L}}
\newcommand{\sR}{\mathscr{R}}
\renewcommand{\part}{\vdash}
\newcommand{\Id}{\text{Id}}
\newcommand{\dum}{\,\cdot\,\,}
\newcommand{\Ga}{\Gamma}
\newcommand{\ric}{\text{Ric}}
\newcommand{\Q}{\mathcal{Q}}
\renewcommand{\j}{\mathsf{i}}
\newcommand{\ep}{\epsilon}
\newcommand{\ext}{\mbox{\Large $\wedge$}}
\newcommand{\eno}{\operatorname{End}}
\newcommand{\sgr}{\operatorname{SGr}}
\newcommand{\si}{\sigma}
\newcommand{\ctm}{T^{\ast}M}
\newcommand{\gr}{\text{Gr}}
\newcommand{\bnabla}{\bar{\nabla}}
\newcommand{\lie}{\mathfrak{L}}
\newcommand{\im}{\textnormal{im\,}}
\renewcommand{\xi}{\frac{\partial}{\partial x^{i}}}
\newcommand{\aff}{\mathbb{aff}}
\newcommand{\lb}{\langle}
\newcommand{\ra}{\rangle}
\newcommand{\ste}{\mathbb{V}}
\newcommand{\al}{\alpha}
\newcommand{\ga}{\gamma}
\newcommand{\spn}{\operatorname{\mathsf{span}}}
\newcommand{\g}{\mathfrak{g}}
\newcommand{\ad}{\operatorname{ad}}
\newcommand{\tensor}{\otimes}
\newcommand{\rea}{\mathbb R}
\newcommand{\com}{\mathbb C}
\newcommand{\tr}{\operatorname{\mathsf{tr}}}
\begin{document}
\title[Symplectic sectional curvature]{Remarks on symplectic sectional curvature}
\author{Daniel J.~F. Fox} 
\address{Departamento de Matemáticas del Área Industrial\\ Escuela Técnica Superior de Ingeniería y Diseño Industrial\\ Universidad Politécnica de Madrid\\Ronda de Valencia 3\\ 28012 Madrid España}
\email{daniel.fox@upm.es}
\keywords{symplectic connection, symplectic sectional curvature, symplectic Lie group}

\begin{abstract}
In \cite{Gelfand-Retakh-Shubin}, I.~M. Gelfand, V. Retakh, and M. Shubin defined the symplectic sectional curvature of a torsion-free connection preserving a symplectic form. The present article defines the corresponding notion of constant symplectic sectional curvature and characterizes this notion in terms of the curvature tensor of the symplectic connection and its covariant derivatives. Some relations between various more general conditions on the symplectic sectional curvature and the geometry of the symplectic connection or that induced on a symplectic submanifold are explored as well.
\end{abstract}

\maketitle

\section{Introduction}
Let $(M, \Om)$ be a connected smooth symplectic manifold of dimension $2n \geq 2$, oriented by the volume form $\Om^{n}$. An affine connection on $M$ is \textit{symplectic} if it is torsion free and $\nabla\Om = 0$. Although the most accessible examples of symplectic connections are Levi-Civita connections of Kähler, pseudo-Kähler, and para-Kähler metrics, even symplectic manifolds that admit no such compatible metric structure admit symplectic connections, for every symplectic manifold admits symplectic connections (see \eqref{gensym}). Traditionally, much of the interest in symplectic connections has focused on their use in schemes of deformation quantization such as that of Fedosov \cite{Fedosov}, but because they exist on any symplectic manifold, it is also interesting to study their geometry in the spirit of classical metric differential geometry.

The basic identities for the curvature of a symplectic connection were probably first obtained by I. Vaisman in \cite{Vaisman}, although there is information in earlier work of other authors, for example A. Lichnerowicz's \cite{Lichnerowicz-propagateurs} and \cite{Lichnerowicz-starproducts}. Among the many available references some basic ones are \cite{Bayen-Flato-Fronsdal-Lichnerowicz-Sternheimer}, \cite{Fedosov}, and \cite{Gelfand-Retakh-Shubin}, in addition to \cite{Vaisman}. The survey \cite{Bieliavsky-Cahen-Gutt-Rawnsley-Schwachhofer} is a good starting point and contains further references.

The relation between the geometry determined by a symplectic connection $\nabla$ and conditions on quantities and tensors constructed from its curvature is incompletely understood. 
For example, in \cite{Gelfand-Retakh-Shubin}, I.~M. Gelfand, V. Retakh, and M. Shubin defined the symplectic sectional curvature of a symplectic connection. Not much has been done with this notion, and its geometric content has been little explored. This note defines the corresponding notion of constant symplectic sectional curvature, characterizes it in terms of the curvature tensor of the symplectic connection and its covariant derivatives, and describes some related constructions. 

The symplectic sectional curvature of a symplectic $2$-plane $L$ is a quadratic form on $L$ rather than a number (see Section \ref{curvaturesection} for the definition), as in the metric setting. Theorem \ref{scurvtheorem} shows that the symplectic sectional curvature is determined entirely by the restriction of the Ricci tensor exactly when the symplectic Weyl tensor vanishes. As a consequence, it is sensible to say that a symplectic connection has constant symplectic sectional curvature if for every symplectic $2$-plane $L$ it equals the restriction of the quadratic form determined by a \textit{parallel} symmetric two tensor.  Corollary \ref{constantcurvcorollary} shows that a symplectic connection has constant symplectic sectional curvature if and only if its symplectic Weyl tensor vanishes and its Ricci tensor is parallel, in which case it is locally symmetric. 
Corollary \ref{ncpccorollary} shows that in this case the Ricci endomorphism (obtained by raising one index of the Ricci tensor using the symplectic form) is parallel, and must be nilpotent of order two, complex, or paracomplex. These conclusions are closely related to and in part can be obtained from those obtained for homogeneous symplectic connections in \cite{Cahen-Gutt-Horowitz-Rawnsley} and for symplectic symmetric spaces (as defined by P. Bieliavsky in \cite{Bieliavsky-thesis}) in \cite{Cahen-Gutt-Rawnsley-symmetric}, as is explained in more detail in Remark \ref{ssremark}.
The relation between the symplectic sectional curvatures and the (para-)holomorphic sectional curvatures of a pseudo-Kähler or para-Kähler structure is also discussed. The precise relation is given in Lemma \ref{grslemma}, that has as a corollary the result of \cite{Gelfand-Retakh-Shubin} that the symplectic sectional curvatures of a Kähler metric cannot have indefinite signature. Corollary \ref{chcorollary} shows that $\nabla$ has constant nonzero symplectic sectional curvature if and only if it is a complex projective or complex hyperbolic space form. 

The study of pseudo-Riemannian manifolds has in common with the study of symplectic connections that the sectional curvature is defined only for nondegenerate subspaces. A Lorentzian manifold of dimension at least $3$ has constant sectional curvature if and only if it has vanishing null sectional curvature (the definitions are recalled in Section \ref{isotropicsection}). In Section \ref{isotropicsection} there is defined for a symplectic connection a notion of vanishing isotropic sectional curvature analogous to the notion of vanishing null sectional curvature, and Lemma \ref{nulllemma} shows that on a symplectic manifold of dimension at least $4$ a symplectic connection has constant symplectic sectional curvature if and only if it has vanishing isotropic sectional curvature.

In Section \ref{submanifoldsection} there is computed the relation between the symplectic sectional curvature of a symplectic connection and the symplectic sectional curvature of the symplectic connection induced on a symplectic submanifold. The absence of the positivity provided by a Riemannian metric means that the formulas obtained are not as obviously useful as their metric counterparts, for example, in the Kähler setting. There is no obvious analogue of the mean curvature vector field, nor any other tensor associated with a submanifold that is linear in the second fundamental form, other than the form itself. However, the formula \eqref{scurvhereditary} for the symplectic sectional curvature of a symplectic submanifold helps identify two tensors (defined in \eqref{smcdefined}) constructed from expressions quadratic in the second fundamental form. Precisely, if $\Pi_{ij}\,^{A}$ is the second fundamental form, these tensors are the pure trace part $\smc_{ij}$ and the trace-free part $\smc_{ijkl}$ of $\Pi_{ik}\,^{A}\Pi_{jl}\,^{B}\Om_{AB} + \Pi_{il}\,^{A}\Pi_{jk}\,^{B}\Om_{AB}$ (see Section \ref{submanifoldsection} for the notational conventions used). It would be interesting to understand the geometric meaning of the vanishing of $\smc_{ij}$ or $\smc_{ijkl}$.

In the metric setting the distance and volume determined by the metric are linked to the curvature via the formulas for their first and second variations. In particular, Jacobi fields are a basic tool. In the symplectic setting it is difficult to link directly the curvature with genuinely geometric quantities, but there is a more serious difficulty that is readily apparent when one tries to mimic metric arguments using Jacobi fields. The symplectic curvature tensor is symmetric rather than antisymmetric in two of its arguments, and this means that quantities that vanish in the metric setting do not vanish in the symplectic setting. This apparently silly technical problem makes it more difficult to control Jacobi fields. What little can be gleaned easily is recounted in Section \ref{npcsection}. In Section \ref{npcsection} it is proved that any pair of points conjugate with respect to some geodesic of a symplectic connection with nonpositive symplectic sectional curvature correspond to a Jacobi field constrained to lie in the symplectic orthogonal complement of the space tangent to the geodesics. That is, if there are conjugate points, they arise from coisotropic variations of geodesics. While this is a weak conclusion to obtain from a seemingly strong hypothesis, it is not clear how to show more solely utilizing unsophisticated arguments mimicking those used in studying Riemannian manifolds.

Section \ref{indefinitesection} records a criterion for a symplectic connection to have indefinite signature symplectic sectional curvature and examples are given for certain left-invariant symplectic connections on symplectic Lie groups. There is defined a canonical left-invariant symplectic connection on a symplectic Lie group. It corresponds to one third the symplectically self-adjoint part of the adjoint transformation (see \eqref{saddefined}). Somewhat surprisingly this construction appears to be new, and it will be studied further elsewhere. Here it is used to give an example of a symplectic Lie group for which the canonical symplectic connection has some symplectic sectional curvatures that are degenerate but nonvanishing.

\section{Preliminaries}
Tensors are indicated using the abstract index notation (see, for example, \cite{Penrose-Rindler} or \cite{Wald}). Enclosure of indices in square brackets or parentheses indicates complete skew-symmetrization or complete symmetrization over tensor factors corresponding to the enclosed indices. Indices are in either \textit{up} position or \textit{down}. For sections of tensor powers of a tangent bundle, up indices are interpreted as contravariant and down indices as covariant tensors. For example, $a_{ij}\,^{k}= a_{(ij)}\,^{k} + a_{[ij]}\,^{k}$ indicates the decomposition of $a_{ij}\,^{k} \in \Ga(\tensor^{2}(\ctm) \tensor TM)$ into its symmetric and skew-symmetric parts. The summation convention is always used in the following form: A label appearing as both an up index and a down index indicates the trace pairing, that is, complete contraction with the tautological tensor $\delta_{i}\,^{j}$ determined by the pairing of vectors with covectors. When an index is raised or lowered using some auxiliary tensor, its horizontal position is maintained. 

The antisymmetric bivector $\Om^{ij}$ inverse to $\Om_{ij}$ is defined by $\Om^{ip}\Om_{pj} = -\delta_{j}\,^{i}$. Indices are raised and lowered using $\Omega_{ij}$ and $\Omega^{ij}$ by contracting with these tensors consistently with the conventions $X_{i} = X^{p}\Om_{pi}$ and $X^{i} = \Om^{ip}X_{p}$. Note that $X_{p}Y^{p} = -X^{p}Y_{p}$, so that care must be taken with signs.

The curvature $R_{ijk}\,^{l}$ of a torsion-free affine connection $\nabla$ is defined by $2\nabla_{[i}\nabla_{j]}X^{k} = R_{ijp}\,^{k}X^{p}$ for a vector field $X \in \Ga(TM)$. The \textit{Ricci curvature} of $\nabla$ is defined to be $R_{ij} = R_{pij}\,^{p}$. 

If $\bnabla$ is any torsion-free affine connection on the symplectic manifold $(M, \Om)$ then 
\begin{align}\label{gensym}
\nabla = \bnabla + \tfrac{2}{3}\Om^{kp}\bnabla_{(i}\Om_{j)p}
\end{align}
is symplectic, so every symplectic manifold admits a symplectic connection. The affine space $\symcon(M, \Om)$ of symplectic connections on $(M, \Om)$ is modeled on the vector space $\Ga(S^{3}(\ctm))$ of completely symmetric covariant three tensors, for the difference tensor $\Pi_{ij}\,^{k} = \bnabla - \nabla$ of torsion-free connections satisfies $\Pi_{[ij]}\,^{k} = 0$, and, if $\bnabla, \nabla \in \symcon(M, \Om)$, together with $-2\Pi_{i[jk]} = \bnabla_{i}\Om_{jk} - \nabla_{i}\Om_{jk} = 0$, this implies $\Pi_{ijk} = \Pi_{(ijk)}$.

\section{Curvatures associated with a symplectic connection}
The basic identities for the curvature of a symplectic connection are by now well known and so no effort is made to attribute the derivation of specific identities. The reader is referred to the references mentioned in the introduction and the survey \cite{Bieliavsky-Cahen-Gutt-Rawnsley-Schwachhofer} for background.

For $\nabla \in \symcon(M, \Om)$, since $\Om^{n}$ is $\nabla$-parallel, the curvature $-R_{ijp}\,^{p}$ of the covariant derivative induced by $\nabla$ on $\ext^{2n} \ctm$ vanishes. Consequently, $\nabla$ has symmetric Ricci tensor, for, by the traced algebraic Bianchi identity, $2R_{[ij]} = - R_{ijp}\,^{p} = 0$. Define $R_{ijkl} = R_{ijk}\,^{p}\Om_{pl}$. By the Ricci identity,
\begin{align}\label{curvsym}
0 = 2\nabla_{[i}\nabla_{j]}\Om_{kl} = -2R_{ij[kl]}.
\end{align}
From \eqref{curvsym} there follows
\begin{align}\label{rsym}
4R_{ijkl} = 3R_{i(jkl)} - 3R_{j(ikl)},
\end{align}
so that $R_{i(jkl)}$ completely determines the curvature of $\nabla$. In particular, $R_{ijkl} = 0$ if and only if $R_{i(jkl)} = 0$. (In the context of the Fedosov deformation quantization, the curvature appears naturally in the form $R_{i(jkl)}$; see \cite{Fedosov}.)
The algebraic Bianchi identity and \eqref{curvsym} yield
\begin{align}\label{twotraces}
R_{p}\,^{p}\,_{ij} = -2R_{ip}\,^{p}\,_{j} = 2R_{pij}\,^{p} = 2R_{ij}.
\end{align}
From \eqref{twotraces} it follows that every nontrivial trace of $R_{ijkl}$ is a constant multiple of $R_{ij}$. The differential Bianchi identity and \eqref{twotraces} yield the contracted differential Bianchi identity
\begin{align}\label{symdiffbianchi}
\nabla^{p}R_{pijk} 
= \tfrac{1}{2}\nabla_{i}R_{p}\,^{p}\,_{jk} = \nabla_{i}R_{jk}. 
\end{align}
Although there is no reasonable notion of scalar curvature for $\nabla \in \symcon(M, \Om)$, the \textit{curvature one-form} $\rf_{i}$, defined by $\rf_{i} = \nabla^{p}R_{ip}$, is a direct generalization of the Hamiltonian vector field generated by the scalar curvature of a Kähler metric, that provides, for a general symplectic connection, a useful substitute for the scalar curvature. Precisely, for the Levi-Civita connection of a Kähler metric the vector field metrically dual to $\rf_{i}$ is the negative of the Hamiltonian vector field generated by the scalar curvature. In particular, a Kähler metric has constant scalar curvature if and only if $\rf_{i} = 0$. 

Skew-symmetrizing \eqref{symdiffbianchi} in the indices $ij$ shows that $\nabla_{p}R_{ijk}\,^{p} = -2\nabla_{[i}R_{j]k}$. Hence, if $\nabla_{p}R_{ijk}\,^{p} = 0$ then $\nabla_{[i}R_{j]k} = 0$, and tracing this shows that in this case $\rf_{i} = 0$. 

The \textit{symplectic Weyl tensor} $W_{ijkl}$ of $\nabla \in \symcon(M, \Om)$ is defined to be the part 
\begin{align}\label{symplecticweyl}
W_{ijkl} = R_{ijkl} - \tfrac{1}{n+1}\left(\Om_{i(k}R_{l)j} - \Om_{j(k}R_{l)i} + \Om_{ij}R_{kl} \right).
\end{align}
of $R_{ijkl}$ that vanishes when contracted with $\Om^{ij}$ on any pair of indices. In \cite{Bieliavsky-Cahen-Gutt-Rawnsley-Schwachhofer} a connection with $W_{ijkl} = 0$ is said to be \textit{of Ricci type}, because its curvature is completely determined by its Ricci tensor. Here the alternative terminology \textit{Weyl flat} is used to emphasize the formal parallel with the conformal Weyl tensor of a metric connection. Note that 
\begin{align}\label{wsym}
W_{i(jkl)} = R_{i(jkl)} - \tfrac{2}{n+1}\Om_{i(j}R_{kl)}. 
\end{align}
The identity \eqref{rsym} is valid for any tensor with the symmetries of the curvature tensor of a symplectic connection. In particular, \eqref{rsym} is valid with $W_{ijkl}$ in place of $R_{ijkl}$. It follows that $\nabla$ is Weyl flat if and only if $W_{i(jkl)} = 0$.

From \eqref{curvsym}-\eqref{symdiffbianchi} there follows
\begin{align}\label{divw1}
\begin{split}
2(n+1)\nabla^{p}W_{pijk} 
 & = (2n+1)\nabla_{i}R_{jk} - 3\nabla_{(i}R_{jk)} + \Om_{i(j}\rf_{k)}.
\end{split}
\end{align}
The tensor $ -2\nabla^{p}W_{i(jk)p} = 3(\nabla^{p}W_{pijk} - \nabla^{p}W_{p(ijk)})$ is symmetric in $jk$ and vanishes under complete symmetrization. From \eqref{divw1} there follow 
\begin{align}
  \label{divw}
  \begin{split}
    (n+1)\nabla^{p}W_{p(ijk)} &= (n-1)\nabla_{(i}R_{jk)},\\
2(n+1)\nabla^{p}W_{ijkp} &= -2(2n+1)\nabla_{[i}R_{j]k} - \Om_{ij}\rf_{k} + \Om_{k[i}\rf_{j]}.
  \end{split}
\end{align}
In \cite{Bourgeois-Cahen}, $\nabla \in \symcon(M, \Om)$ is called \textit{preferred} if $\nabla_{(i}R_{jk)} = 0$  (see also \cite{Bieliavsky-Cahen-Gutt-Rawnsley-Schwachhofer}). By \eqref{divw}, a symplectic connection on a symplectic manifold of dimension at least $4$ is preferred if and only if $\nabla^{p}W_{p(ijk)} = 0$. 
Call $\nabla \in \symcon(M, \Om)$ \textit{symplectically flat} if it is Weyl flat and preferred. 
Since here $2n > 2$, by \eqref{divw}, Weyl flat implies symplectically flat, while if $2n = 2$, the Weyl flat condition is vacuous. The definition is made by analogy with the definitions of projectively flat or conformally flat. In every case the contracted differential Bianchi identity yields an identity like \eqref{divw}, which is vacuous in the lowest dimensional case, and in this lowest dimensional regime the definition has to be the vanishing of the part of the covariant derivative of the Ricci tensor appearing in \eqref{divw}. 

Contracting \eqref{symplecticweyl} with the Ricci tensor yields
\begin{align}
\label{riccurv}
\begin{split}
R^{pq}W_{pijq} &= R^{pq}R_{pijq} + \tfrac{1}{n+1}R_{ip}R_{j}\,^{p} + \tfrac{1}{2(n+1)}R^{pq}R_{pq}\Om_{ij}.
\end{split}
\end{align}
Simplifying $\nabla_{i}\rf_{j}$ using \eqref{riccurv} yields
\begin{align}
\label{nablarho}
\begin{split}
\nabla_{i}\rf_{j}& = 2\nabla^{p}\nabla_{i}R_{jp} + 2R^{pq}R_{pijq} + 2R_{ip}R_{j}\,^{p},\\
 & = 2\nabla^{p}\nabla_{i}R_{jp} + 2R^{pq}W_{pijq} + \tfrac{2n}{n+1}\left(R_{ip}R_{j}\,^{p} - \tfrac{1}{2n}R^{pq}R_{pq}\Om_{ij}\right).
\end{split}
\end{align}

Let $(g, J, \Om)$ be a pseudo-Kähler or para-Kähler structure. This means that $\Om_{ij}$ is a symplectic form; $J_{i}\,^{j}$ is a field of integrable endomorphisms of the tangent bundle satisfying $J_{p}\,^{j}J_{i}\,^{p} = \ep \delta_{i}\,^{j}$, where $\ep = -1$ in the pseudo-Kähler case, and $\ep = 1$ in the para-Kähler case; $g_{ij}$ is a pseudo-Riemannian metric, having necessarily split signature in the para-Kähler case; and $J_{i}\,^{p}g_{pj} = \Om_{ij}$. (The qualifier \textit{pseudo} means that the metric need not be Riemannian, although it could be.) Such a structure $(g, J, \Om)$ has \textit{constant (para-)holomorphic sectional curvature $4c$} if the curvature $R_{ijkl} = R_{ijk}\,^{p}\Om_{pl}$ of the Levi-Civita connection $D$ of $g_{ij}$, which is symplectic, satisfies
\begin{align}\label{cholo}
R_{ijkl} = 2c\left( \Om_{l[j}g_{i]k} - g_{l[i}\Om_{j]k} + \Om_{ij}g_{kl}\right) = 2c\left( \Om_{i(k}g_{l)j} - \Om_{j(k}g_{l)i} + \Om_{ij}g_{kl}\right).
\end{align}
(The second equality in \eqref{cholo} is always true). When comparing \eqref{cholo} with formulas found in other sources, it should be kept in mind that indices are raised and lowered using the symplectic form $\Om_{ij}$ and not the metric $g_{ij}$. 

\begin{lemma}\label{chololemma}
Let $D$ be the Levi-Civita connection of a pseudo-Kähler or para-Kähler structure $(g, J, \Om)$ on a symplectic manifold of dimension $2n > 2$. Then:
\begin{enumerate}
\item $D$ has metrically trace-free symplectic Weyl tensor if and only if $g$ is Einstein.
\item $D$ has vanishing symplectic Weyl tensor if and only if it has constant (para-)holomorphic sectional curvature.
\end{enumerate} 
\end{lemma}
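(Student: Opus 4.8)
The plan is to exploit the defining formula \eqref{symplecticweyl} together with the Kähler curvature identities that force $R_{ijk}\,^{p}$ (as an endomorphism-valued two-form) to commute with $J$. Recall that for the Levi-Civita connection $D$ of a (pseudo- or para-)Kähler structure one has the extra symmetry $R_{ijkl} = \ep\, R_{ijp}\,^{q}J_{k}\,^{p}J_{q}\,^{l}$, equivalently $R_{ijkl} = \ep\,R_{ij}\,^{pq}J_{pk}J_{ql}$ after lowering, where all raising and lowering is done with $\Om$. The whole lemma is then a matter of tracking how this $J$-invariance interacts with the two pieces of \eqref{symplecticweyl}.

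For part (1), first I would compute the metric trace $g^{kl}W_{ijkl}$. The symplectic Weyl tensor is by construction trace-free with respect to $\Om$ on every pair of indices, but it need not be trace-free with respect to $g$; and since $g^{kl} = \ep\, J^{k}\,_{p}\Om^{pl}$, the metric trace of any tensor is (up to sign) a $J$-twisted symplectic trace. Applying $g^{kl}$ to \eqref{symplecticweyl} and using the $J$-invariance of $R_{ijkl}$ to rewrite $g^{kl}R_{ijkl}$ in terms of the Ricci tensor, one finds that $g^{kl}W_{ijkl}$ is a universal nonzero multiple of the trace-free part of the Ricci tensor (with the scalar curvature subtracted off), plus possibly a $\Om_{ij}$-term that vanishes by symmetry. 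Hence $g^{kl}W_{ijkl} = 0$ precisely when $\ric$ is pure trace, i.e. $g$ is Einstein. (One should be a little careful that the Ricci tensor of the Levi-Civita connection, with the sign conventions of the excerpt, is $J$-invariant as a symmetric bilinear form, so that $R_{ij}$ and the "metrically dual" $\ric$ agree up to raising with $\Om$ versus $g$.)

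For part (2), the direction "$W = 0$ $\Rightarrow$ constant (para-)holomorphic sectional curvature" is almost immediate: if $W_{ijkl}=0$ then by \eqref{symplecticweyl} the full curvature is the Ricci-type expression $\tfrac{1}{n+1}(\Om_{i(k}R_{l)j} - \Om_{j(k}R_{l)i} + \Om_{ij}R_{kl})$; now feed this back into the $J$-invariance identity $R_{ijkl} = \ep R_{ij}\,^{pq}J_{pk}J_{ql}$ — this forces $R_{ij}$ itself to be proportional to $g_{ij}$ (the off-diagonal, non-$g$ parts cannot be $J$-invariant), so $\ric = 2nc\,g$ for a constant $c$ by the contracted Bianchi identity \eqref{symdiffbianchi}, and substituting back gives exactly \eqref{cholo}. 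Conversely, if \eqref{cholo} holds then $R_{ij} = 2(n+1)c\,g_{ij}$, so $g$ is Einstein, and plugging \eqref{cholo} and this Ricci expression directly into \eqref{symplecticweyl} and simplifying (all terms are built from $\Om$ and $g$ in the same index patterns) makes $W_{ijkl}$ collapse to zero.

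The main obstacle I anticipate is purely bookkeeping: getting the constants and signs right in the presence of the convention $\Om^{ip}\Om_{pj} = -\delta_j{}^i$, the sign $X_pY^p = -X^pY^p$, and the factor $\ep = \mp 1$ distinguishing the pseudo-Kähler and para-Kähler cases, all while $g$ and $\Om$ are interchangeable via $J$ only up to these signs. The conceptual content — that the $J$-invariance of Kähler curvature turns the $\Om$-trace-free condition defining $W$ into a $g$-trace condition, hence an Einstein condition, and that a Ricci-type curvature which is also $J$-invariant must be of constant (para-)holomorphic type — is straightforward; the risk is an arithmetic slip in verifying that the coefficient $\tfrac{1}{n+1}$ in \eqref{symplecticweyl} matches the coefficient $2c$ in \eqref{cholo} under $R_{ij} = 2(n+1)c\,g_{ij}$. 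I would organize the computation around \eqref{wsym}, working with the completely symmetrized curvature $R_{i(jkl)}$ throughout, since by \eqref{rsym} it already determines everything and the symmetrization kills many of the cross terms.
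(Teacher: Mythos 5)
Your proposal is correct and follows essentially the same route as the paper: both arguments rest on tracing \eqref{symplecticweyl} against $g$ using the compatibility of $g$, $J$, and $\Om$ (the paper's \eqref{metricw}--\eqref{traceweinstein}), and for part (2) combine the resulting Einstein condition with the constancy of the scalar curvature of an Einstein metric (this is where $2n>2$ enters) and substitute back to pass between \eqref{cholo} and $W_{ijkl}=0$. The one point to tighten is your parenthetical claim that a Ricci-type curvature can be $J$-invariant only if $R_{ij}\propto g_{ij}$ because ``the off-diagonal, non-$g$ parts cannot be $J$-invariant'': the Ricci tensor of a (para-)Kähler metric is always $J$-invariant, so this step must instead be extracted by contracting the $J$-invariance identity with $g$ (equivalently, by invoking the trace formula \eqref{traceweinstein}, i.e.\ part (1)), after which the argument goes through exactly as you describe.
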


\begin{proof}
By \eqref{cholo},
\begin{align}\label{metricw}
R_{ijk}\,^{p}g_{pl} = W_{ijk}\,^{p}g_{pl} + \tfrac{1}{n+1}\left(g_{l[i}R_{j]k} + \ep \Om_{k[i}R_{j]p}J_{l}\,^{p} - \ep\Om_{ij}R_{kp}J_{l}\,^{p}\right).
\end{align}
Contracting this with $g^{jk}$ and simplifying the result yields
\begin{align}\label{traceweinstein}
g^{ab}W_{iab}\,^{p}g_{pj} = \tfrac{n}{n+1}(R_{ij} - \tfrac{1}{2n}R_{g}g_{ij}),
\end{align}
where $R_{g} = g^{ij}R_{ij}$ is the (metric) scalar curvature of $g$. From \eqref{traceweinstein} and $W_{[ijk]l} = 0$ it follows that $W_{ijk}\,^{l}$ is metrically trace-free if and only if $g$ is Einstein. If $D$ has constant (para-)holomorphic sectional curvature, then tracing \eqref{cholo} shows that $R_{ij} = 2(n+1)cg_{ij}$, and substituting this into \eqref{cholo} and comparing with \eqref{symplecticweyl} shows that $W_{ijkl} = 0$. If $D$ satisfies $W_{ijkl} = 0$, then by \eqref{traceweinstein}, $g$ is Einstein with $2nR_{ij} = R_{g}g_{ij}$. Since the scalar curvature of an Einstein pseudo-Riemannian metric is constant, substituting this into \eqref{symplecticweyl} yields \eqref{cholo} with $R_{g} = 4n(n+1)c$.
\end{proof}

An endomorphism $A_{i}\,^{j}$ of the tangent bundle of $(M, \Om)$ is infinitesimally symplectic, meaning $2A_{[ij]} = A_{i}\,^{p}\Om_{pj} + A_{j}\,^{p}\Om_{ip} = 0$, if and only if the associated tensor $A_{ij} = A_{i}\,^{p}\Om_{pj}$ is symmetric. 

\begin{lemma}\label{symflatlemma}
Let $(M, \Om)$ be a $2n$-dimensional symplectic manifold. For a symplectically flat connection $\nabla \in \symcon(M, \Om)$, the following are equivalent:
\begin{enumerate}
\item\label{sss1} The curvature one-form vanishes.
\item\label{sss2} The Ricci tensor is parallel. 
\item\label{sss3} $\nabla$ is locally symmetric.
\end{enumerate}
In the case there hold \eqref{sss1}-\eqref{sss3}, the Ricci endomorphism $R_{i}\,^{j}$ satisfies
\begin{align}\label{getcomplex}
R_{j}\,^{p}R_{p}\,^{i} = -r\delta_{j}\,^{i},
\end{align}
where $2nr = R_{pq}R^{pq}$ is constant. Moreover:
\begin{enumerate}\renewcommand{\theenumi}{\roman{enumi}}
\item If $r > 0$, then $J_{i}\,^{j} = |r|^{-1/2}R_{i}\,^{j}$ is a complex structure forming with $\Om_{ij}$ a constant holomorphic sectional curvature pseudo-Kähler structure with associated pseudo-Riemannian metric $-|r|^{-1/2}R_{ij}$ and Levi-Civita connection $\nabla$.
\item If $r < 0$, then $J_{i}\,^{j} = |r|^{-1/2}R_{i}\,^{j}$ is a paracomplex structure forming with $\Om_{ij}$ a  constant paraholomorphic sectional curvature para-Kähler structure with associated pseudo-Riemannian metric $|r|^{-1/2}R_{ij}$ and Levi-Civita connection $\nabla$.
\item If $r = 0$ there holds one of the following:
\begin{enumerate}
\item The Ricci curvature $R_{ij}$ is identically zero and $\nabla$ is a flat affine connection.
\item The Ricci endomorphism $R_{i}\,^{j}$ is preserved by $\nabla$, is infinitesimally symplectic, and is two-step nilpotent. Its kernel equals the symplectic annihilator of its image, and its image and kernel are totally geodesic integrable subbundles of $TM$. Moreover the induced connections on these integrable subbundles are flat.
\end{enumerate}
\end{enumerate}
\end{lemma}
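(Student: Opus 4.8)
The plan is to prove the three equivalences \eqref{sss1}--\eqref{sss3} first, then extract the algebraic consequence \eqref{getcomplex}, and finally split into cases according to the sign of $r$. For the equivalences, I would start from the hypothesis that $\nabla$ is symplectically flat, i.e.\ Weyl flat ($W_{ijkl}=0$) and preferred ($\nabla_{(i}R_{jk)}=0$). Under $W=0$, formula \eqref{divw1} collapses: the left side vanishes, so $(2n+1)\nabla_i R_{jk} - 3\nabla_{(i}R_{jk)} + \Om_{i(j}\rf_{k)} = 0$, and since $\nabla$ is preferred the middle term drops, giving $(2n+1)\nabla_i R_{jk} = -\Om_{i(j}\rf_{k)}$. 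From this identity, $\eqref{sss1}\Rightarrow\eqref{sss2}$ is immediate: if $\rf=0$ then $\nabla_i R_{jk}=0$. Conversely $\eqref{sss2}\Rightarrow\eqref{sss1}$ is trivial since $\rf_i = \nabla^p R_{ip}$. For $\eqref{sss2}\Leftrightarrow\eqref{sss3}$: if the Ricci tensor is parallel, then because $W=0$ the full curvature $R_{ijkl}$ is a universal linear expression \eqref{symplecticweyl} in $\Om$ (parallel) and $R_{ij}$ (parallel), hence $\nabla R_{ijkl}=0$, i.e.\ $\nabla$ is locally symmetric; the converse is obvious since the Ricci tensor is a contraction of the curvature. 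So the core of the equivalence is just feeding $W=0$ and the preferred condition into \eqref{divw1}.

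Next, assuming \eqref{sss1}--\eqref{sss3}, I would derive \eqref{getcomplex}. Since $\nabla R_{jk}=0$ and $\nabla\Om=0$, the Ricci endomorphism $R_i{}^j = \Om^{jp}R_{ip}$ is parallel, and it is infinitesimally symplectic because $R_{ij}$ is symmetric (the remark just before the lemma). Being parallel, the Ricci identity forces $R_i{}^j$ to commute with the curvature operator, but the cleaner route is to use $W=0$: plug $W=0$ into \eqref{nablarho}. The left side is $\nabla_i\rf_j=0$, the term $\nabla^p\nabla_i R_{jp}$ vanishes since $R$ is parallel, and the $W$-term vanishes; what remains is $\frac{2n}{n+1}\big(R_{ip}R_j{}^p - \frac{1}{2n}R^{pq}R_{pq}\Om_{ij}\big)=0$, i.e.\ $R_{ip}R_j{}^p = \frac{1}{2n}R^{pq}R_{pq}\,\Om_{ij}$. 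Setting $2nr = R_{pq}R^{pq}$ and raising an index gives exactly \eqref{getcomplex}, $R_j{}^pR_p{}^i = -r\delta_j{}^i$ (the sign coming from $\Om^{ip}\Om_{pj}=-\delta_j{}^i$); and $r$ is constant because it is built from the parallel tensors $R$ and $\Om$. I should double-check the index gymnastics and the sign conventions here, since the paper warns repeatedly that $X_pY^p = -X^pY_p$; getting \eqref{getcomplex} with the correct sign of $r$ is the one genuinely fiddly computation, and it is what makes the trichotomy come out right.

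For the case analysis, note that $R_i{}^j$ is a parallel, infinitesimally symplectic endomorphism with $R^2 = -r\,\Id$. If $r>0$, write $J = |r|^{-1/2}R$, so $J^2 = -\Id$: this is an almost complex structure, parallel hence integrable, and compatible with $\Om$ in the sense that $g_{ij} := -|r|^{-1/2}R_{ip}\Om^{?}\dots$ — more precisely $g_{ij} = -|r|^{-1/2}R_{ij}$ after checking $J_i{}^p g_{pj} = \Om_{ij}$ and symmetry; since $\nabla$ is torsion-free, metric (it preserves $g$, being parallel on $R_{ij}$ and $\Om$), it is the Levi-Civita connection of $g$, and then Lemma \ref{chololemma}(2) together with $W=0$ gives constant holomorphic sectional curvature. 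If $r<0$, set $J = |r|^{-1/2}R$ with $J^2 = +\Id$, a paracomplex structure, and repeat the argument with $g_{ij} = |r|^{-1/2}R_{ij}$ and the para-Kähler version of Lemma \ref{chololemma}. If $r=0$, then $R^2 = 0$. Either $R_{ij}\equiv 0$, and then $W=0$ plus \eqref{symplecticweyl} force $R_{ijkl}=0$, so $\nabla$ is flat affine — case (a). Otherwise $R_i{}^j$ is a nonzero parallel two-step nilpotent infinitesimally symplectic endomorphism; since it is parallel its image and kernel are parallel (hence totally geodesic and integrable) subbundles, and $\operatorname{im}R \subseteq \ker R$ since $R^2=0$, with equality of $\ker R$ with the symplectic annihilator of $\operatorname{im}R$ following from infinitesimal symplecticity ($R_{ij}=R_{ji}$ means $\ker R = (\operatorname{im}R)^{\perp_\Om}$ by a rank count using nondegeneracy of $\Om$); finally the induced connections on these subbundles are flat because the ambient curvature, being a universal expression in $\Om$ and $R_{ij}$ with $R_{ij}$ supported on the nilpotent part, restricts to zero there — this last point needs the explicit form \eqref{symplecticweyl} with $W=0$ and a short computation showing all the $\Om_{(k}R_{l)}$ terms vanish when restricted to $\operatorname{im}R$ or $\ker R$. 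The main obstacle I anticipate is precisely this last verification in case (b): checking that the restricted connections are flat, and that $\ker R$ is exactly the $\Om$-annihilator of $\operatorname{im}R$, requires carefully tracking which contractions of $R_{ijkl}$ survive on the nilpotent subbundles, and is the place where sign and index-position conventions are most likely to trip one up.
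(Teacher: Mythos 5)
Your proposal is correct and follows essentially the same route as the paper: it feeds $W_{ijkl}=0$ and the preferred condition into \eqref{divw1} to get $(2n+1)\nabla_{i}R_{jk}=-\Om_{i(j}\rf_{k)}$ for the equivalences, extracts \eqref{getcomplex} from \eqref{nablarho}, and handles the trichotomy via Lemma \ref{chololemma} (with the parallel endomorphism giving integrability) and, for $r=0$, the vanishing of the Ricci-type curvature \eqref{symplecticweyl} on $\ker R_{i}\,^{j}\supseteq\operatorname{im}R_{i}\,^{j}$. The only (harmless) deviation is that you get locally symmetric $\Rightarrow$ Ricci parallel directly from the Ricci tensor being a trace of the parallel curvature, where the paper invokes the contracted Bianchi identity \eqref{symdiffbianchi}.
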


The equivalence of \eqref{sss1} and \eqref{sss3} in Lemma \ref{symflatlemma} is the corollary to Lemma $1$ in \cite{Cahen-Gutt-Horowitz-Rawnsley}. As is explained in Remark \ref{ssremark} following the proof, Lemma \ref{symflatlemma} can also be deduced from Theorem $2$ of \cite{Cahen-Gutt-Rawnsley-symmetric}.
\begin{proof}
By \eqref{divw1} and \eqref{divw}, that $W_{ijkl} = 0$ implies $(2n+1)\nabla_{i}R_{jk} = - \Om_{i(j}\rf_{k)}$ (this is Lemma $1$ of \cite{Cahen-Gutt-Horowitz-Rawnsley}). It follows that $R_{ij}$ is parallel if $\rf_{i} = 0$ (the converse is true by definition of $\rf_{i}$). From the contracted differential Bianchi identity \eqref{symdiffbianchi} it follows that if $\nabla$ is locally symmetric, then the Ricci tensor is parallel. On the other hand, differentiating \eqref{symplecticweyl} shows that if $W_{ijkl} = 0$ and the Ricci tensor is parallel, then $\nabla$ is locally symmetric. In the case there holds these equivalent conditions, equation \eqref{nablarho} implies \eqref{getcomplex}. Since $R_{ij}$ is parallel, $2n r = R_{pq}R^{pq}$ is constant. If $r \neq 0$, then \eqref{getcomplex} implies $R_{i}\,^{j}$ is invertible, so that $R_{ij}$ is nondegenerate. The claims when $r \neq 0$ follow from \eqref{getcomplex}, the observation that a parallel endomorphism has vanishing Nijenhuis tensor, and the conclusion of Lemma \ref{chololemma} that the Levi-Civita connection $D$ of a pseudo-Kähler or para-Kähler structure $(g, J, \Om)$ on a symplectic manifold of dimension $2n > 2$ has vanishing symplectic Weyl tensor if and only if it has constant (para-)holomorphic sectional curvature. 

Suppose $r = 0$. Temporarily write $\rice_{i}\,^{j} = R_{i}\,^{j}$ for the Ricci endomorphism. That $\ker \rice = (\im \rice)^{\perp}$ follows from the fact that $\rice$ is infinitesimally symplectic (equivalent to the symmetry of $R_{ij}$) and the relation $\rice \circ \rice = 0$ (this conclusion was observed in section $4.3$ of \cite{Cahen-Gutt-Horowitz-Rawnsley}). Since $\nabla \rice = 0$, $\nabla_{X}(\rice Y) = \rice(\nabla_{X}Y)$ for all $X, Y \in \Ga(TM)$, from which it follows that $\im \rice$ and $\ker \rice$ are totally geodesic. Because $[X, Y] = \nabla_{X}Y - \nabla_{Y}X$, these subbundles are also integrable. It follows from \eqref{symplecticweyl} that the connections induced on these subbundles are flat.
\end{proof}

\begin{remark}\label{ssremark}
The conclusions of Lemma \ref{symflatlemma} are very similar to those of Theorem $2$ of \cite{Cahen-Gutt-Rawnsley-symmetric}, where they are deduced for symplectic symmetric spaces as defined by P. Bieliavsky in his thesis \cite{Bieliavsky-thesis} (see also \cite{Bieliavsky-Cahen-Gutt} or \cite{Bochenski-Tralle}). A \textit{symmetric symplectic space} is a symplectic manifold $(M, \Om)$ with a symmetric space structure such that for every $p \in M$ the symmetry involution $s_{p}$ is a symplectomorphism. In \cite{Bieliavsky-Cahen-Gutt} it is proved that on a symmetric symplectic space there is a canonical symplectic connection $\nabla$ that makes $M$ an affine symmetric space and such that the automorphisms of the symmetric symplectic space are exactly the automorphisms of $\nabla$ that are also symplectomorphisms. To deduce Lemma \ref{symflatlemma} from Theorem $2$ of \cite{Cahen-Gutt-Rawnsley-symmetric} it suffices to show that an affine symmetric space that admits a symplectic form preserved by the affine connection is necessarily a symmetric symplectic space. This amounts to showing that in this case the symmetry involutions are necessarily symplectomorphisms. This can be shown in the same way as the analogous fact for Riemannian symmetric spaces, as in Section $6$ of Chapter XI of \cite{Kobayashi-Nomizu-2}:
\begin{lemma}
Let $(M, \Om)$ be a symplectic manifold. Suppose that a symplectic connection $\nabla \in \symcon(M, \Om)$ is affine locally symmetric. Then for each $p \in M$ the symmetry $s_{p}$ is a symplectomorphism. 
\end{lemma}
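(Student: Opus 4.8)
The plan is to transcribe, with the symplectic form in place of the metric, the argument for Riemannian symmetric spaces recalled in Section~$6$ of Chapter~XI of \cite{Kobayashi-Nomizu-2}. The only features of the metric used there are that it is a parallel tensor and that it is invariant under $-\Id$ at the fixed point of a geodesic symmetry; the symplectic form $\Om$ has both properties, the second precisely because $\Om$ has even degree.

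First I would invoke the classical fact that an affine connection with vanishing torsion is locally symmetric, i.e.\ $\nabla R = 0$, if and only if for every $p \in M$ the local geodesic symmetry $s_p$, defined on some connected normal neighborhood $U_p$ of $p$, is an affine transformation of $\nabla$. Thus, under the hypothesis, $s_p$ is an affine diffeomorphism of $(U_p, \nabla)$ fixing $p$ with $(ds_p)_p = -\Id_{T_pM}$. Because an affine map intertwines $\nabla$ with itself, it carries $\nabla$-parallel tensor fields to $\nabla$-parallel tensor fields; in particular $s_p^{\ast}\Om$ is a $\nabla$-parallel $2$-form on $U_p$, as $\nabla\Om = 0$.

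Next I would evaluate at $p$: for $X, Y \in T_pM$,
\[
(s_p^{\ast}\Om)_p(X, Y) = \Om_p\big((ds_p)_pX, (ds_p)_pY\big) = \Om_p(-X, -Y) = \Om_p(X, Y),
\]
the sign disappearing since $\Om$ is of degree two. Hence the two parallel $2$-forms $s_p^{\ast}\Om$ and $\Om$ agree at the point $p$. Since a parallel tensor field on a connected manifold is determined by its value at one point (parallel transport along any path from $p$ to $q$ sends the value at $p$ to the value at $q$), it follows that $s_p^{\ast}\Om = \Om$ throughout $U_p$, so $s_p$ is a symplectomorphism. As $p$ was arbitrary, this proves the lemma.

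There is no genuine obstacle here: the entire substance is the classical equivalence between local symmetry and the local geodesic symmetries being affine, combined with the trivial remark that $(-1)^2 = 1$, which makes the symplectic form—exactly like a Riemannian metric—invariant at the fixed point of $s_p$. The only thing to be mildly careful about is to state the parallel-tensor uniqueness on a connected domain, and to keep the (purely local) nature of $s_p$ in mind when $\nabla$ is only locally symmetric.
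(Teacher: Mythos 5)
Your proposal is correct and follows essentially the same route as the paper: both use that $s_p$ is an affine transformation with $(ds_p)_p = -\Id$, so $s_p^{\ast}\Om$ agrees with $\Om$ at $p$ because $\Om$ has even degree, and then propagate this pointwise agreement over the connected neighborhood using parallelism of $\Om$ (the paper does this by an explicit parallel-transport computation, you by the equivalent remark that a parallel tensor on a connected domain is determined by its value at one point). No gaps.
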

\begin{proof}
First, it will be shown that if a diffeomorphism $\phi$ of a symplectic manifold $(M, \Om)$ is an automorphism of a symplectic connection $\nabla \in \symcon(M, \Om)$ and there is a point $p \in M$ such that $\phi^{\ast}(\Om)_{p} = \Om_{p}$, then $\phi$ is a symplectomorphism. Let $q \in M$ and let $\si$ be a curve in $M$ from $q$ to $p$. Let $P$ and $P^{\prime}$ be the operators of parallel transport along $\si$ and $\si^{\prime} = f(\si)$. For $X, Y \in T_{q}M$,
\begin{align}
\begin{split}
\Om_{f(q)}&(TF(q)(X), TF(q)(Y))  = \Om_{f(p)}(P^{\prime} Tf(q)(X), P^{\prime} Tf(q)(Y)) \\
&= \Om_{f(p)}(Tf(p)(PX), Tf(p)(PY)) = f^{\ast}(\Om)_{p}(PX, PY) = \Om_{p}(PX, PY) = \Om_{q}(X, Y),
\end{split}
\end{align}
showing that $f^{\ast}(\Om) = \Om$. As a consequence of this claim, if $\nabla$ is affine locally symmetric and preserves $\Om$, then every symmetry involution $s_{p}$ preserves $\Om$ (because $Ts_{p}(p) = -\Id$, so preserves $\Om$), and so $\nabla$ and $\Om$ determine a structure of a locally symmetric symplectic space.
\end{proof}
\end{remark}

\section{Symplectic sectional curvature}\label{curvaturesection}
Let $\gr(2, TM)$ be the bundle over $M$ the fibers of which are the Grassmannians of two-dimensional subspaces of $TM$. A field of two-dimensional subspaces $L \subset \gr(2, TM)$ is \textit{symplectic} if the restriction of $\Om_{ij}$ to $L$ is nondegenerate. Let $\sgr(2, TM) \subset \gr(2, TM)$ be the open subset the fibers of which comprise symplectic subspaces. 

Let $\nabla \in \symcon(M, \Om)$. Because $R_{ijkl}$ is symmetric in $k$ and $l$ it makes sense to consider the map $\Ga(\ext^{2}(TM)) \to \Ga(S^{2}(\ctm))$ defined by $\om^{ij} \to \om^{ij}R_{ijkl}$. Identify $\om^{ij}R_{ijkl}$ with the quadratic form $\Q_{\om}(Z) = \om^{ij}R_{ijkl}Z^{k}Z^{l}$. A bivector $\om^{ij} \in \Ga(\ext^{2}(TM))$ is \textit{symplectic} if $\om^{ij}\Om_{ij} \neq 0$. An analogue of the sectional curvature of a Riemannian metric is obtained by considering $\Q_{\om}$ for decomposable symplectic bivectors $X \wedge Y$. Specifically, given $p \in M$ and a symplectic subspace $L \in \sgr(2, T_{p}M)$, the quadratic form $\scurv_{p, L}$ defined by 
\begin{align}\label{scurvdefined}
\scurv_{p, L}(Z) = \frac{ X^{i}Y^{j}Z^{k}Z^{l}R_{ijkl}}{X^{i}Y^{j}\Om_{ij}},
\end{align}
for any decomposable symplectic bivector $2X^{[i}Y^{j]}$ spanning $L$ and any $Z \in T_{p}M$, does not depend on the choice of $X$ and $Y$. It is the \textit{symplectic curvature quadratic form} of $L$. The \textit{symplectic sectional curvature} of $L \in \sgr(2, T_{p}M)$ means the quadratic form on $L$ obtained by restricting $\scurv_{p, L}$ to $L$. This notion was proposed by I.~M. Gelfand, V. Retakh, and M. Shubin in \cite{Gelfand-Retakh-Shubin}. 
The possibilities at $p$ are described by the $SL(2, \rea)$ orbits on the space of quadratic forms on $\rea^{2}$. The nontrivial possibilities are: definite, with a repeated eigenvalue; indefinite with additively inverse eigenvalues; and degenerate with single eigenvalue $\pm 1$. 

The symplectic connection $\nabla$ has positive (resp. negative, nonnegative, etc.) \textit{symplectic sectional curvature} if for every $p \in M$ and every $L \in \sgr(T_{p}M)$ the restriction to $L$ of the quadratic form $\scurv_{p, L}$ is positive definite (resp. negative definite, nonnegative definite, etc.). 

For a pseudo-Kähler or para-Kähler metric $(g, J, \Om)$ with $J_{i}\,^{p}J_{p}\,^{j} = \ep \delta_{i}\,^{j}$, the (para-)holomorphic sectional curvature at $p \in M$ of the span $L \subset T_{p}M$ of a nonnull vector $X \in T_{p}M$ and its image $JX$ is defined to be 
\begin{align}\label{hcurvdefined}
\hcurv_{p, L} =   \tfrac{g(R(JX, X)X, JX)}{g(X, X)g(JX, JX) - g(X, JX)^{2}} = -\ep \tfrac{g(R(JX, X)X, JX)}{g(X, X)^{2}} =  -\ep \tfrac{\Om(R(JX, X)X, X)}{g(X, X)^{2}} .
\end{align}
The definition \eqref{hcurvdefined} agrees with the previously made definition of constant (para-)holomorphic sectional curvature; if the curvature has the form \eqref{cholo} then the expression \eqref{hcurvdefined} evaluates to $4c$.

Lemma \ref{grslemma} extends the observation of Remark $3.13$ of \cite{Gelfand-Retakh-Shubin}, that a symplectic sectional curvature of a Kähler metric is either definite or identically zero.
\begin{lemma}\label{grslemma}
For a pseudo-Kähler or para-Kähler structure $(g, J, \Om)$ with $J_{i}\,^{p}J_{p}\,^{j} = \ep \delta_{i}\,^{j}$, the symplectic sectional curvature of the span $L$ of a nonnull vector $X$ and its image $JX$ equals the (para-)holomorphic sectional curvature of $L$ multiplied by the restriction to $L$ of $g$. Precisely, 
\begin{align}\label{kahlersec2}
\scurv_{p, L}(Y) = g(Y, Y)\hcurv_{p, L}
\end{align}
for $Y$ contained in $L$.
\end{lemma}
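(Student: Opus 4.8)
The plan is to unwind \eqref{scurvdefined} for the decomposable bivector $X\wedge JX$ spanning $L$, and then to use the fact that $D$ parallelizes $J$ in order to recognize the resulting quadratic form on $L$ as a multiple of the restriction of $g$.

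First I would put $\scurv_{p, L}$ into invariant form. Since $\Om_{ij} = J_i\,^p g_{pj}$, one has $X^i (JX)^j \Om_{ij} = g(JX, JX)$, and the compatibility relations imply $g(JX, JX) = -\ep\, g(X, X)$; this is nonzero because $X$ is nonnull, which also shows that $X$ and $JX$ span a two-dimensional subspace and that $L$ is symplectic, so that $\scurv_{p, L}$ is defined. From $R_{ijkl} = R_{ijk}\,^p \Om_{pl}$ one gets $X^i (JX)^j Y^k Y^l R_{ijkl} = \Om(R(X, JX)Y, Y)$, where $R$ denotes the curvature of $D$; by \eqref{curvsym} this expression is symmetric in the two occurrences of $Y$, hence a genuine quadratic form in $Y$, and
\begin{align*}
\scurv_{p, L}(Y) = \frac{\Om(R(X, JX)Y, Y)}{-\ep\, g(X, X)}.
\end{align*}

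Both sides of \eqref{kahlersec2} are now quadratic forms in $Y$ on the two-dimensional space $L = \spn\{X, JX\}$, so it suffices to check their equality after writing $Y = \al X + \be JX$ and comparing the coefficients of $\al^2$, $\al\be$, and $\be^2$, where $g(Y, Y) = (\al^2 - \ep\,\be^2)g(X, X)$ by $g(X, JX) = 0$ and $g(JX, JX) = -\ep\, g(X, X)$. The coefficient of $\al^2$ is precisely the definition \eqref{hcurvdefined} of $\hcurv_{p, L}$ rewritten, using $R(U, V) = -R(V, U)$ and the value of $X^i (JX)^j \Om_{ij}$ just computed. For the coefficients of $\al\be$ and $\be^2$ the key input is $D J = 0$, which forces the curvature operator $R(U, V)$ to commute with $J$; combined with $\Om(U, V) = g(JU, V)$, $g(JU, JV) = -\ep\, g(U, V)$, $g(X, JX) = 0$, and $J^2 = \ep$, each of the quantities $\Om(R(X, JX)X, JX)$, $\Om(R(X, JX)JX, X)$, and $\Om(R(X, JX)JX, JX)$ collapses to a scalar multiple of $\Om(R(X, JX)X, X)$. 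One then checks that the $\al\be$-terms cancel, and that the coefficient of $\be^2$ matches the corresponding coefficient of $g(Y, Y)\hcurv_{p, L}$, which completes the proof.

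The only delicate point is the sign bookkeeping in this last step, since indices are raised and lowered with $\Om$ rather than with $g$ (so $X_p Y^p = -X^p Y_p$) and the sign $\ep$ enters at several stages; the computation goes through cleanly once one fixes a consistent order for moving $J$ past $g$ and $\Om$. There is no conceptual obstacle. As an alternative that sidesteps the case analysis: for nonnull $Y$ the pair $\{Y, JY\}$ also spans $L$ (because $g(Y, JY) = 0$ and $g(JY, JY) = -\ep\, g(Y, Y) \neq 0$), so the first two steps applied with $Y$ in place of $X$ give $\scurv_{p, L}(Y) = g(Y, Y)\,\hcurv_{p, L}^{Y}$, where $\hcurv_{p, L}^{Y}$ is \eqref{hcurvdefined} evaluated with $Y$; one then invokes the classical fact, itself a consequence of $D J = 0$, that \eqref{hcurvdefined} does not depend on the choice of nonnull vector in the holomorphic plane $L$, and concludes for all $Y \in L$ by continuity.
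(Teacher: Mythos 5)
Your proposal is correct and is essentially the paper's own proof: the paper likewise expresses $\scurv_{p,L}$ via $\Om(R(X,JX)Y,Y)/\Om(X,JX)$ with $\Om(X,JX)=-\ep g(X,X)$, writes $Y$ as a combination of $X$ and $JX$ (using a trigonometric/hyperbolic parametrization rather than your coefficients $\al,\be$), and uses the commutation $R(X,JX)JX = JR(X,JX)X$ coming from $DJ=0$ to collapse everything to a multiple of $\Om(R(X,JX)X,X)$ before comparing with \eqref{hcurvdefined}. Your coefficient-by-coefficient check is the same computation in slightly different form, and the sign bookkeeping you flag does close up as claimed.
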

\begin{proof}
There holds $\Om(R(X, JX)Y, Y) = - g(R(X, JX)Y, JY)$.
The symplectic sectional curvature of the symplectic subspace spanned by $X$ and $JX$ is simply the product of the (para-)holomorphic sectional curvature of the span of $X$ and $JX$ with the quadratic form on $\spn\{X, JX\}$ determined by $g$. To make this precise, define
\begin{align}
&s(\theta) = \begin{cases} \sin \theta & \text{if}\,\, \ep = -1,\\ \sinh \theta & \text{if}\,\, \ep = 1\end{cases},&& c(\theta) = 
 \begin{cases} \cos \theta & \text{if}\,\, \ep = -1,\\ \cosh \theta & \text{if}\,\, \ep = 1\end{cases}
\end{align}
If $Y = r(c(\theta) X + s(\theta) JX)$ then $g(Y, Y) = r^{2}(c(\theta)^{2} - \ep s(\theta)^{2})g(X, X) = r^{2}g(X, X)$, and 
\begin{align}\label{kahlersec}
\Om(R(X, JX)Y, Y) = r^{2}g(R(JX, X)X, JX),
\end{align}
where there has been used $R(X, JX)JX = JR(X, JX)X$.
Dividing \eqref{kahlersec} by $\Om(X, JX)$ and comparing the result with \eqref{hcurvdefined} yields \eqref{kahlersec2}.
\end{proof}
One consequence of Lemma \ref{grslemma} is the conclusion of Remark $3.13$ of \cite{Gelfand-Retakh-Shubin}, that the symplectic sectional curvatures of the Levi-Civita connection of a Kähler metric either have definite signature or are identically zero.

The identity \eqref{kahlersec} shows that the symplectic sectional curvatures carry at least as much information as do the holomorphic sectional curvatures. In particular, if the symplectic sectional curvatures of a Kähler form are positive or negative, the same is true of its holomorphic sectional curvatures. 

Theorem \ref{scurvtheorem} shows the relation between the notion of symplectic sectional curvature and the symplectic Weyl tensor.

\begin{theorem}\label{scurvtheorem}
On a $2n$-dimensional symplectic manifold $(M, \Om)$ a symplectic connection $\nabla \in \symcon(M, \Om)$ is Weyl flat if and only if there is a symmetric covariant two tensor $A_{ij} = A_{(ij)}$ such that for every $p \in M$ and every two-dimensional symplectic subspace $L \subset T_{p}M$ there holds $\scurv_{p, L}(Z) = 4A(Z, Z)$ whenever $Z \in L$. Moreover, if this is the case, then in fact $2(n+1)A_{ij} = R_{ij}$.
\end{theorem}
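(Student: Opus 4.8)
The plan is to prove the two implications separately. The forward (``only if'') direction is a direct substitution of \eqref{symplecticweyl} into \eqref{scurvdefined}. Assume $W_{ijkl} = 0$, so that
\begin{align*}
R_{ijkl} = \tfrac{1}{n+1}\bigl(\Om_{i(k}R_{l)j} - \Om_{j(k}R_{l)i} + \Om_{ij}R_{kl}\bigr).
\end{align*}
Fix $p \in M$, a symplectic plane $L = \spn\{X, Y\} \subset T_{p}M$, and $Z = aX + bY \in L$. Substituting into the numerator of \eqref{scurvdefined} and using $R_{ij(kl)} = R_{ijkl}$ to absorb the symmetrizations against $Z^{k}Z^{l}$, the numerator is a sum of three terms: in the first two the $\Om$-factor pairs $X$ (respectively $Y$) with $Z$ while the Ricci factor pairs $Y$ (respectively $X$) with $Z$, and the third term is $(X^{i}Y^{j}\Om_{ij})(R_{kl}Z^{k}Z^{l})$. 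Expanding $Z = aX + bY$ and using $\Om_{ij}X^{i}X^{j} = \Om_{ij}Y^{i}Y^{j} = 0$, the first two terms combine to $(X^{i}Y^{j}\Om_{ij})(R_{kl}Z^{k}Z^{l})$, so the numerator equals $\tfrac{2}{n+1}(X^{i}Y^{j}\Om_{ij})(R_{kl}Z^{k}Z^{l})$ and hence $\scurv_{p,L}(Z) = \tfrac{2}{n+1}R_{kl}Z^{k}Z^{l}$, independently of $L$. Thus $A_{ij} := \tfrac{1}{2(n+1)}R_{ij}$, symmetric because $R_{ij}$ is, has the stated property and $2(n+1)A_{ij} = R_{ij}$.

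For the converse, assume a symmetric $A_{ij}$ as in the statement exists, and set
\begin{align*}
S_{ijkl} = R_{ijkl} - 2\bigl(\Om_{i(k}A_{l)j} - \Om_{j(k}A_{l)i} + \Om_{ij}A_{kl}\bigr),
\end{align*}
the subtracted tensor being the ``Ricci type'' summand of \eqref{symplecticweyl} built from the symmetric tensor $2(n+1)A$; one checks that $S_{ijkl}$ has the algebraic symmetries of a symplectic curvature tensor (antisymmetry in $ij$, symmetry in $kl$, algebraic Bianchi). Applying the forward computation to the subtracted tensor shows its symplectic curvature quadratic form on a symplectic plane is $4A(Z,Z)$, so, by hypothesis, $\scurv^{S}_{p,L}(Z) := \bigl(X^{i}Y^{j}Z^{k}Z^{l}S_{ijkl}\bigr)/\bigl(X^{i}Y^{j}\Om_{ij}\bigr)$ vanishes for every symplectic plane $L = \spn\{X,Y\}$ and every $Z \in L$. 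Granting that this forces $S_{ijkl} = 0$, the tensor $R_{ijkl}$ equals the Ricci type tensor built from $2(n+1)A$; taking its Ricci trace (which by \eqref{symplecticweyl}, since $W$ is trace free, recovers the Ricci-slot tensor) gives $R_{ij} = 2(n+1)A_{ij}$, whence $W_{ijkl} = S_{ijkl} = 0$.

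It remains to prove that $\scurv^{S}_{p,L} \equiv 0$ on all symplectic planes forces $S_{ijkl} = 0$, and this is the step I expect to be the main obstacle. Fix $p$ and $X \in T_{p}M \setminus \{0\}$. Since $\Om$ is nondegenerate there is $Y$ with $X^{i}Y^{j}\Om_{ij} \neq 0$; taking $L = \spn\{X,Y\}$ and $Z = X$ in the vanishing of $\scurv^{S}$ gives $S_{ijkl}X^{i}Y^{j}X^{k}X^{l} = 0$. Regarded as a polynomial in $(X,Y)$ this vanishes on the dense open set $\{X^{i}Y^{j}\Om_{ij} \neq 0\}$, hence identically, and being linear in $Y^{j}$ it yields $S_{ijkl}X^{i}X^{k}X^{l} = 0$ for all $X \in T_{p}M$. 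Polarizing in $X$ shows that the complete symmetrization of $S_{ijkl}$ over the slots $i,k,l$ vanishes, that is $S_{ijkl} + S_{kjil} + S_{ljik} = 0$. Now symmetrize this identity over the labels $j,k,l$: the first term becomes $S_{i(jkl)}$, while in each of the remaining two terms one completely symmetrizes $S$ over three slots that include the antisymmetric first pair, and the complete symmetrization of a tensor over a set of slots containing an antisymmetric pair vanishes; hence $S_{i(jkl)} = 0$.

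Finally, by \eqref{rsym} (valid for any tensor with the symmetries of a symplectic curvature tensor), $4S_{ijkl} = 3S_{i(jkl)} - 3S_{j(ikl)} = 0$, so $S_{ijkl} = 0$, completing the converse. The delicate point throughout is exactly the passage from the scalar condition to the tensor identity: because $S$ is antisymmetric in its first pair, only the restriction of $S_{ijkl}X^{i}X^{k}X^{l}$ carries information, and one needs the algebraic Bianchi identity (through \eqref{rsym}) to turn the vanishing of $S_{i(jkl)}$ into the vanishing of the full curvature tensor; symmetrizing over the ``wrong'' slots yields nothing.
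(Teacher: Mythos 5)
Your proposal is correct and takes essentially the same route as the paper: the forward direction is the same direct substitution of the Weyl-flat form of $R_{ijkl}$ into \eqref{scurvdefined} (the paper merely phrases it via the projection onto $L$), and your converse—extending the vanishing from the open set $\Om(X,Y) \neq 0$, polarizing in the cubic slot, and then using \eqref{rsym} together with a trace to identify $A_{ij}$ with $\tfrac{1}{2(n+1)}R_{ij}$—is exactly the paper's argument, recast for the difference tensor $S_{ijkl}$ instead of for $R_{ijkl}$ itself.
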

\begin{proof}
The projection $P_{L}:\Ga(TM) \to \Ga(TM)$ onto the two-dimensional symplectic subspace $L$ along its symplectic complement $L^{\perp}$ is given in terms of any vector fields $X$ and $Y$ spanning $L$ by 
\begin{align}
P_{L}(Z) = \Om(X, Y)^{-1}\left(\Om(Z, Y)X - \Om(Z, X)Y\right).
\end{align}
If $\nabla$ is Weyl flat then, writing $R_{ij} = 2(n+1)A_{ij}$, 
for any $L \in \sgr(2, TM)$ and $Z \in \Ga(TM)$, there holds $\scurv_{L}(Z) = 2A(Z + P_{L}(Z), Z)$, by \eqref{scurvdefined}. It follows that if $Z$ is in the symplectic complement $L^{\perp}$ then $\scurv_{L}(Z) = 2A(Z, Z)$, while if $Z$ is in $L$ then $\scurv_{X, Y}(Z) = 4A(Z, Z)$. 

Now suppose that for any $L \in \sgr(2, TM)$ there holds $\scurv_{L}(Z) = 4A(Z, Z)$ whenever $Z \in L$. Then, by the definition \eqref{scurvdefined}, there holds
\begin{align}\label{scd1}
\Om(R(Y, X)X, X) = -4A(X, X)\Om(X, Y)
\end{align}
whenever $\Om(X, Y) \neq 0$. Fix $X$ and choose $U \in \Ga(TM)$ such that $\Om(X, U) = 1$. Then any vector $Y$ has the form $Y = fU + B$ for some $B$ such that $\Om(X, B) = 0$ and some function $f$. Applying \eqref{scd1} as is, and with $U$ in place of $Y$, yields $\Om(R(B, X)X, X) = 0$. Hence \eqref{scd1} holds for all $Y$. Since $X$ is arbitrary this shows $R_{i(jkl)} =- 4A_{(jk}\Om_{l)i}$.
By \eqref{curvsym} there holds $4R_{ijkl} = 3R_{i(jkl)} - 3R_{j(ikl)}$ for any symplectic connection. Hence, 
\begin{align}\label{ra}
\begin{split}
R_{ijkl} & = \tfrac{3}{4}\left(R_{i(jkl)} - 3R_{j(ikl)}\right) = 2\left(\Om_{i(k}A_{l)j} - \Om_{j(k}A_{l)i} + \Om_{ij}A_{kl}\right).
\end{split}
\end{align}
Tracing \eqref{ra} shows $R_{ij} = 2(n+1)A_{ij}$, and, substituted into \eqref{ra}, this shows $\nabla$ is Weyl flat.
\end{proof}

Because of Theorem \ref{scurvtheorem}, it makes sense to say that $\nabla \in \symcon(M, \Om)$ has \textit{constant symplectic sectional curvature} if there is a parallel symmetric covariant $2$-tensor $A_{ij} = A_{(ij)}$ such that for all $p$ and all $L \in \sgr(2, T_{p}M)$, the quadratic form $\scurv_{p, L}$ is equal to the restriction to $L$ of the quadratic form determined by $A_{ij}$. 
\begin{corollary}\label{constantcurvcorollary}
A symplectic connection $\nabla$ has constant symplectic sectional curvature if and only if it is Weyl flat with parallel Ricci tensor, in which case it is locally symmetric.
\end{corollary}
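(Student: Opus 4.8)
The plan is to read the corollary off directly from Theorem \ref{scurvtheorem}, the only real work being to match normalizations: the quadratic form appearing in Theorem \ref{scurvtheorem} is $4A_{ij}$ with $A_{ij}=\tfrac{1}{2(n+1)}R_{ij}$ forced, whereas the definition of constant symplectic sectional curvature asks for a \emph{parallel} symmetric $B_{ij}$ whose associated quadratic form restricts on every symplectic $2$-plane $L\in\sgr(2,T_pM)$ to $\scurv_{p,L}$.

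First I would treat the implication from Weyl flat plus parallel Ricci to constant symplectic sectional curvature. By Theorem \ref{scurvtheorem}, for every $p$ and every $L\in\sgr(2,T_pM)$ one has $\scurv_{p,L}(Z)=\tfrac{2}{n+1}R(Z,Z)$ for $Z\in L$, where $R(Z,Z)=R_{ij}Z^iZ^j$. Then $B_{ij}:=\tfrac{2}{n+1}R_{ij}$ is a symmetric covariant two tensor, it is $\nabla$-parallel because $R_{ij}$ is, and by construction its quadratic form restricts on each such $L$ to $\scurv_{p,L}$. This is exactly the defining condition, so $\nabla$ has constant symplectic sectional curvature.

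Conversely, assume $\nabla$ has constant symplectic sectional curvature, witnessed by a parallel symmetric $B_{ij}$ with $\scurv_{p,L}(Z)=B(Z,Z)$ for all such $L$ and all $Z\in L$. Setting $A_{ij}:=\tfrac14 B_{ij}$ gives $\scurv_{p,L}(Z)=4A(Z,Z)$ on each $L$, so the ``if'' half of Theorem \ref{scurvtheorem} shows $\nabla$ is Weyl flat, and its ``moreover'' clause gives $R_{ij}=2(n+1)A_{ij}=\tfrac{n+1}{2}B_{ij}$; since $B_{ij}$ is parallel, so is $R_{ij}$. This proves the equivalence. For the final assertion, in either case $W_{ijkl}=0$, so by \eqref{ra} the full curvature is expressed algebraically in terms of $\Om_{ij}$ and $R_{ij}$, namely $R_{ijkl}=2(\Om_{i(k}A_{l)j}-\Om_{j(k}A_{l)i}+\Om_{ij}A_{kl})$ with $A_{ij}=\tfrac{1}{2(n+1)}R_{ij}$; differentiating this and using $\nabla\Om=0$ and $\nabla R_{ij}=0$ yields $\nabla_mR_{ijkl}=0$, i.e.\ $\nabla_mR_{ijk}{}^l=0$, so $\nabla$ is locally symmetric. (Equivalently, one differentiates \eqref{symplecticweyl} directly, as in the proof of Lemma \ref{symflatlemma}.)

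I expect no genuine obstacle: the corollary is essentially a restatement of Theorem \ref{scurvtheorem} in the presence of the word ``parallel''. The only points requiring attention are the factor $4$ relating the two quadratic forms and the fact that the converse direction really does use the hypothesis for \emph{every} symplectic $2$-plane $L$, which is precisely the hypothesis of Theorem \ref{scurvtheorem}; no tensor identities beyond \eqref{ra} and \eqref{symplecticweyl} are needed, and the argument is uniform in the dimension $2n$.
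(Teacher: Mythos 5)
Your proposal is correct and follows essentially the same route as the paper: both directions are read off from Theorem \ref{scurvtheorem} (with the normalization $2(n+1)A_{ij}=R_{ij}$ handling the parallelism of the Ricci tensor), and the locally symmetric conclusion comes from differentiating the curvature formula once the Weyl tensor vanishes and $R_{ij}$ is parallel — the paper simply delegates that last step to Lemma \ref{symflatlemma}, whose relevant ingredient is exactly the differentiation of \eqref{symplecticweyl} you describe. Your version is slightly more explicit about the converse implication and the factor of $4$, but the content is the same.
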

\begin{proof}
By Theorem \ref{scurvtheorem}, a symplectic connection with constant symplectic sectional curvatures is Weyl flat, and its Ricci tensor is parallel, so is locally symmetric by Lemma \ref{symflatlemma}. 
\end{proof}

Formula \eqref{ra} shows that a Kähler manifold of constant holomorphic sectional curvature $4c$ has constant symplectic sectional curvature $4cg$. In general, the differences with the Kähler case are that $A_{ij}$ need not be nondegenerate, and $A_{i}\,^{j}$ need not be a complex structure. However, together Lemma \ref{symflatlemma} and Corollary \ref{constantcurvcorollary} imply the following.
\begin{corollary}\label{ncpccorollary}
On the $2n$-dimensional symplectic manifold $(M,\Om)$, the connection $\nabla \in \symcon(M, \Om)$ has constant symplectic sectional curvature if and only if one of the following mutually exclusive possibilities occurs, where $2nr = R_{pq}R^{pq}$:
\begin{enumerate}
\item $r = 0$ and $R_{i}\,^{j}$ is a parallel integrable nilpotent endomorphism of square zero.
\item $r > 0$ and either of $\pm |r|^{-1/2}R_{i}\,^{j}$ is a complex structure forming with $\Om$ a constant homolomorphic sectional curvature pseudo-Kähler structure with Levi-Civita connection $\nabla$.
\item $r < 0$ and either of $\pm |r|^{-1/2}R_{i}\,^{j}$ is a paracomplex structure forming with $\Om_{ij}$ a constant paraholomorphic sectional curvature para-Kähler structure with Levi-Civita connection $\nabla$.
\end{enumerate}
\end{corollary}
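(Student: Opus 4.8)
The plan is to obtain this as an essentially immediate consequence of Corollary \ref{constantcurvcorollary} and Lemma \ref{symflatlemma}, the only real work being to match the three possibilities with the trichotomy of the lemma. First I would invoke Corollary \ref{constantcurvcorollary}: $\nabla$ has constant symplectic sectional curvature if and only if it is Weyl flat with parallel Ricci tensor. A parallel Ricci tensor forces the curvature one-form $\rf_{i} = \nabla^{p}R_{ip}$ to vanish, and, by the discussion of \eqref{divw}, when $2n > 2$ a Weyl flat connection is automatically preferred, hence symplectically flat; when $2n = 2$ the Weyl flat condition is vacuous, but parallel Ricci still gives $\nabla_{(i}R_{jk)} = 0$, so $\nabla$ is symplectically flat with $\rf_{i} = 0$ in this case too. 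Either way the hypotheses of Lemma \ref{symflatlemma} are met, so $R_{j}\,^{p}R_{p}\,^{i} = -r\delta_{j}\,^{i}$ with $2nr = R_{pq}R^{pq}$ a globally defined constant whose sign is therefore an invariant of $\nabla$; this sign yields the mutual exclusivity of the three cases. It then remains only to read off the conclusions of Lemma \ref{symflatlemma}: case (i) is possibility (2) and case (ii) is possibility (3), where one notes that $-J$ is a (para)complex structure whenever $J$ is, so that ``$|r|^{-1/2}R_{i}\,^{j}$'' may be replaced by ``either of $\pm|r|^{-1/2}R_{i}\,^{j}$''.

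The one point needing a comment is possibility (1), which merges the alternatives (iii)(a) and (iii)(b) of Lemma \ref{symflatlemma}. In (iii)(a) the Ricci tensor vanishes identically, so $R_{i}\,^{j} = 0$, which is trivially a parallel integrable nilpotent endomorphism of square zero (and $\nabla$ is then flat, consistently with the lemma). In (iii)(b) Lemma \ref{symflatlemma} already gives that $R_{i}\,^{j}$ is parallel, infinitesimally symplectic, and two-step nilpotent; the relation $R_{j}\,^{p}R_{p}\,^{i} = -r\delta_{j}\,^{i}$ with $r = 0$ is precisely that $R_{i}\,^{j}$ squares to zero, and a parallel endomorphism has vanishing Nijenhuis tensor, hence is integrable. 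So in both sub-cases $R_{i}\,^{j}$ is a parallel integrable nilpotent endomorphism of square zero, and $r = 0$ there because the square of $R_{i}\,^{j}$ being zero forces $R_{pq}R^{pq}$ to vanish.

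For the converse I would verify that each of (1), (2), (3) yields a Weyl flat connection with parallel Ricci tensor, so that Corollary \ref{constantcurvcorollary} returns constant symplectic sectional curvature. In (2) (resp. (3)) the datum is a pseudo-Kähler (resp. para-Kähler) structure of constant (para-)holomorphic sectional curvature with Levi-Civita connection $\nabla$; by Lemma \ref{chololemma}(2) this is equivalent to $W_{ijkl} = 0$, and the structure being pseudo- or para-Kähler means $J_{i}\,^{j}$ is $\nabla$-parallel, so $R_{i}\,^{j} = \pm|r|^{1/2}J_{i}\,^{j}$, and hence $R_{ij}$, is parallel. In (1), $R_{i}\,^{j}$ parallel gives $R_{ij}$ parallel, while the residual Weyl flatness is part of the standing framework: the three possibilities are to be read as the partition, by the sign of $r$, of the class of symplectically flat connections with parallel Ricci tensor, which by Theorem \ref{scurvtheorem} and Corollary \ref{constantcurvcorollary} is exactly the class of connections of constant symplectic sectional curvature. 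I expect the main obstacle to be not any single computation but precisely this bookkeeping --- above all, recognizing that the innocuous-looking phrase ``parallel integrable nilpotent endomorphism of square zero'' is exactly what Lemma \ref{symflatlemma}(iii) delivers, and keeping straight the equivalence of ``Weyl flat with parallel Ricci tensor'' and ``constant symplectic sectional curvature'' throughout the matching.
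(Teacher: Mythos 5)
Your proposal is correct and follows essentially the same route as the paper, which derives this corollary directly by combining Corollary \ref{constantcurvcorollary} with the trichotomy of Lemma \ref{symflatlemma} (merging the lemma's two $r=0$ subcases, exactly as you do). Your extra care on the converse for possibility (1) --- reading it within the class of Weyl flat connections with parallel Ricci tensor, since $R_{i}\,^{j}$ parallel nilpotent alone would not force $W_{ijkl}=0$ --- matches the paper's intended reading and is a sensible clarification rather than a deviation.
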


\begin{corollary}\label{chcorollary}
Let $(M, \Omega)$ be a $2n$-dimensional symplectic manifold and let $\nabla \in \symcon(M, \Om)$. 
\begin{enumerate}
\item If either $M$ is compact or $\nabla$ is complete, then $\nabla$ has constant positive symplectic sectional curvature if and only if $M$ is complex projective space and $\nabla$ is the Levi-Civita connection of the Fubini-Study metric.
\item $\nabla$ has constant negative symplectic sectional curvature if and only if $\nabla$ is the Levi-Civita connection of a complex hyperbolic metric.
\end{enumerate}
\end{corollary}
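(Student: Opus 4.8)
The proof assembles Theorem~\ref{scurvtheorem}, Corollary~\ref{constantcurvcorollary} and Corollary~\ref{ncpccorollary}, and then, in the positive case, invokes the classification of complete complex space forms. The easy implications come first. If $M = \mathbb{P}^{n}(\mathbb{C})$ and $\nabla$ is the Levi-Civita connection of the Fubini-Study metric $g$, normalized so that its Kähler form is $\Om$, then $(g, J, \Om)$ has constant holomorphic sectional curvature $4c$ with $c > 0$, so by Lemma~\ref{chololemma} $\nabla$ is Weyl flat, and comparing \eqref{ra} with \eqref{cholo} gives $A_{ij} = c\,g_{ij}$ in the notation of Theorem~\ref{scurvtheorem}; since $g$ is positive definite, $\scurv_{p,L}(Z) = 4A(Z,Z) = 4c\,g(Z,Z)$ is positive definite on every symplectic $2$-plane $L$. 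The identical computation with $c < 0$, applied to the Levi-Civita connection of a complex hyperbolic metric, gives the implication from right to left in (2).

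For the converse, suppose $\nabla$ has constant positive (respectively negative) symplectic sectional curvature. By Theorem~\ref{scurvtheorem} and Corollary~\ref{constantcurvcorollary}, $\nabla$ is Weyl flat with parallel Ricci tensor and locally symmetric, and $\scurv_{p,L}(Z) = 4A(Z,Z)$ with $2(n+1)A_{ij} = R_{ij}$. Since every nonzero vector lies in some symplectic $2$-plane, $A_{ij}$, and hence $R_{ij}$, is positive (respectively negative) definite; in particular $R_{ij}$ is nondegenerate. I claim this forces $r > 0$ in the trichotomy of Corollary~\ref{ncpccorollary}. Indeed, the Ricci endomorphism $R_{i}\,^{j}$ is infinitesimally symplectic, and one computes $R_{ij}v^{i}v^{j} = \Om(R\,v, v)$, where $R\,v$ is the image of $v$ under $R_{i}\,^{j}$; a real eigenvector $v$ of $R_{i}\,^{j}$ with eigenvalue $\mu$ would then give $R_{ij}v^{i}v^{j} = \mu\,\Om(v, v) = 0$, contradicting definiteness. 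Combined with $R_{j}\,^{p}R_{p}\,^{i} = -r\,\delta_{j}\,^{i}$ from \eqref{getcomplex}, this rules out $r \le 0$: for $r < 0$ the endomorphism $R_{i}\,^{j}$ would split $TM$ into its $\pm|r|^{1/2}$-eigenspaces, and for $r = 0$ it would be singular. Hence $r > 0$, and we are in case (2) of Corollary~\ref{ncpccorollary}: $J_{i}\,^{j} = \pm|r|^{-1/2}R_{i}\,^{j}$ is a complex structure, the associated pseudo-Kähler structure $(g, J, \Om)$ has constant holomorphic sectional curvature $4c$, and $\nabla$ is its Levi-Civita connection.

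Next I pin down the signatures. A $J$-holomorphic $2$-plane $L = \spn\{X, JX\}$ is symplectic exactly when $X$ is non-null for $g$ (since $\Om(X, JX) = g(X,X)$), and Lemma~\ref{grslemma} gives $\scurv_{p,L}(X) = 4c\,g(X,X)$ there. In the positive case this forces $c\,g(X,X) > 0$ for every non-null $X$, which is impossible unless $g$ is definite; replacing $(g, J)$ by $(-g, -J)$ if necessary — which leaves $\Om$ and $\nabla$ unchanged — we may assume $g$ is a positive definite Kähler metric, and then $c > 0$. Thus $\nabla$ is the Levi-Civita connection of a Kähler metric of constant positive holomorphic sectional curvature. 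In the negative case the same reasoning makes $g$ (again after a possible sign change) a positive definite Kähler metric of constant negative holomorphic sectional curvature, which by definition is a complex hyperbolic metric; this completes the proof of (2), and no global hypothesis was used there.

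It remains, in the positive case, to identify $M$. The hypothesis that $M$ is compact or $\nabla$ is complete makes $g$ a complete Riemannian metric: a compact Kähler manifold is metrically complete by Hopf--Rinow, and affine completeness of $\nabla$ is equivalent to geodesic completeness of $g$. By the classification of complete complex space forms, the universal cover of $(M, g, J)$ is holomorphically isometric to $\mathbb{P}^{n}(\mathbb{C})$ with a multiple of the Fubini-Study metric; hence $M = \mathbb{P}^{n}(\mathbb{C})/\Gamma$ for a finite group $\Gamma$ acting freely by holomorphic isometries. But every finite-order holomorphic automorphism of $\mathbb{P}^{n}(\mathbb{C})$ is induced by a diagonalizable element of $GL(n+1, \mathbb{C})$, hence has a fixed point, so $\Gamma$ is trivial and $M = \mathbb{P}^{n}(\mathbb{C})$. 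A constant holomorphic sectional curvature Kähler metric on $\mathbb{P}^{n}(\mathbb{C})$ is a multiple of the Fubini-Study metric, and its scale is fixed by the requirement that its Kähler form be $\Om$, so $\nabla$ is the Levi-Civita connection of the Fubini-Study metric. The main obstacle is exactly this last step: the appeal to the classification of complete complex space forms together with the verification that $\mathbb{P}^{n}(\mathbb{C})$ admits no nontrivial free holomorphic finite quotient; everything preceding it is bookkeeping on top of Corollaries~\ref{constantcurvcorollary} and~\ref{ncpccorollary} and the elementary eigenvalue argument for $r > 0$.
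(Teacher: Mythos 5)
Your argument is correct, and its overall skeleton coincides with the paper's: reduce, via Theorem \ref{scurvtheorem}, Corollary \ref{constantcurvcorollary}, Lemma \ref{symflatlemma}, and Corollary \ref{ncpccorollary}, to the statement that $\nabla$ is the Levi-Civita connection of a definite Kähler metric of constant positive (resp.\ negative) holomorphic sectional curvature, then identify the manifold in the positive case. The differences are in two sub-steps. First, to rule out $r \leq 0$ you use the observation that definiteness of $R_{ij}$ forbids real eigenvectors of the Ricci endomorphism, whereas the paper notes directly that $2nr = R^{pq}R_{pq} = R_{ab}R_{cd}\Om^{ac}\Om^{bd}$ is the squared norm of the bivector $\Om^{ij}$ in the definite metric $\pm R_{ij}$, hence positive; both are valid, the paper's being a one-line computation, yours being more robust in that it never leaves linear algebra. (Your subsequent signature discussion via Lemma \ref{grslemma} is also fine but redundant: once $r>0$, the metric of Corollary \ref{ncpccorollary} is the multiple $\mp|r|^{-1/2}R_{ij}$ of the already definite tensor $R_{ij}$, so definiteness is automatic.) Second, for the global identification in the positive case the paper argues: completeness plus positive Ricci gives compactness by Myers, then Synge (using that constant positive holomorphic sectional curvature forces positive Riemannian sectional curvature) gives simple connectedness, and the simply connected positive complex space form is $\mathbb{P}^{n}(\mathbb{C})$ with a Fubini--Study metric. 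You instead pass to the universal cover via the classification of complete complex space forms and kill the deck group by the fixed-point property of holomorphic automorphisms of $\mathbb{P}^{n}(\mathbb{C})$ (indeed any element of $PGL(n+1,\mathbb{C})$ has an eigenvector, so finiteness and diagonalizability are not even needed). This buys you independence from Synge's theorem and from the curvature-pinching fact it requires, at the cost of invoking the full space-form classification rather than only its simply connected case; either route is complete and correct. One cosmetic point: in the "easy" direction you normalize the Fubini--Study Kähler form to be $\Om$, but since any parallel symplectic form on $\mathbb{P}^{n}(\mathbb{C})$ is a nonzero constant multiple of the Kähler form and $\scurv_{p,L}$ is unchanged under rescaling $\Om$ (numerator and denominator of \eqref{scurvdefined} scale equally), no normalization is actually needed.
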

\begin{proof}
If $\nabla$ has constant symplectic sectional curvature which is definite, then either $R_{ij}$ or $-R_{ij}$ is a Riemannian metric with Levi-Civita connection $\nabla$. Since $2nr = R^{pq}R_{pq} = R_{ab}R_{pq}\Om^{ap}\Om^{aq}$ is simply the norm of the bivector $\Om^{ij}$ in this metric, in either case $r$ is a positive constant. By Lemma \ref{symflatlemma}, one of $\pm |r|^{-1/2}R_{i}\,^{j}$ is a complex structure forming with $\Om_{ij}$ a Kähler structure which by Lemma \ref{grslemma} has constant holomorphic sectional curvature of the same sign as the symplectic sectional curvature of $\nabla$. The Levi-Civita connections of homothetic metrics are the same, so $\nabla$ is the Levi-Civita connection of a Kähler metric of constant holomorphic sectional curvature $\pm 4$. In the positive case, if $\nabla$ is assumed complete, then, by the Myers Theorem, it must in fact be compact, and then, by the Synge Theorem, it is simply-connected. The only simply-connected manifold admitting a metric of constant positive holomorphic sectional curvature is complex projective space. On the other hand, that the Fubini-Study metric has the stated properties is clear. 
\end{proof}

\section{Isotropic sectional curvature}\label{isotropicsection}
Compared with the Riemannian setting, one of the difficulties in working with symplectic sectional curvature is the existence of isotropic subspaces, so it is reasonable to look for analogues with the sectional curvature of indefinite signature metrics, which also admit isotropic subspaces. 

A Lorentzian metric $g$ with curvature tensor $R$ on a manifold $M$ is said to have \textit{vanishing null sectional curvature} at $p \in M$ if $g(R(X, Y)X, Y) = 0$ for all vectors $X, Y \in T_{p}M$ spanning a $g$-null subspace. It is known that a Lorentzian metric on a manifold of dimension at least $3$ has vanishing null sectional curvature at every point if and only if it has constant sectional curvature (where sectional curvatures of a Lorentzian metric are defined only for nondegenerate subspaces). See, for example, Proposition $8.28$ of \cite{O'Neill-book}.

The analogy with the Lorentzian situation motivates the following definition and Lemma \ref{nulllemma}. Let $(M, \Om)$ be a $2n$-dimensional symplectic manifold. A symplectic connection $\nabla \in \symcon(M, \Om)$ has \textit{vanishing isotropic sectional curvature} at $p \in M$ if $\Om(R(X, Y)Y, Y) = 0$ for all vectors $X, Y \in T_{p}M$ spanning an isotropic subspace of $T_{p}M$, that is, such that $\Om (X, Y) = 0$.

\begin{lemma}\label{nulllemma}
 Let $(M, \Om)$ be a symplectic manifold of dimension $2n \geq 4$. A symplectic connection $\nabla \in \symcon(M, \Om)$ has constant symplectic sectional curvature if and only if it has vanishing isotropic sectional curvature at every point of $M$.
\end{lemma}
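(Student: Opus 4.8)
The plan is to move back and forth between the condition on isotropic $2$-planes and the algebraic form \eqref{ra} of the curvature, using Theorem~\ref{scurvtheorem}, the contracted Bianchi identities \eqref{divw1}--\eqref{divw}, and Corollary~\ref{constantcurvcorollary}.

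Suppose first that $\nabla$ has constant symplectic sectional curvature. By Theorem~\ref{scurvtheorem} the curvature has the form \eqref{ra}, $R_{ijkl}=2(\Om_{i(k}A_{l)j}-\Om_{j(k}A_{l)i}+\Om_{ij}A_{kl})$ with $A_{ij}$ symmetric. Contracting with $X^{i}Y^{j}Y^{k}Y^{l}$, and observing that the term coming from $\Om_{j(k}A_{l)i}$ drops out because $\Om(Y,Y)=0$, I expect $\Om(R(X,Y)Y,Y)=X^{i}Y^{j}Y^{k}Y^{l}R_{ijkl}=4\,\Om(X,Y)\,A(Y,Y)$. This vanishes exactly when $\Om(X,Y)=0$, so $\nabla$ has vanishing isotropic sectional curvature; note that only the shape \eqref{ra}, not parallelism of $A$, is used here.

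For the converse, assume vanishing isotropic sectional curvature at every point, and fix $p\in M$. For each $Y\in T_{p}M$ the covector $Y^{j}Y^{k}Y^{l}R_{ijkl}$ annihilates the hyperplane $\{X:\Om(X,Y)=0\}$ (the case $X$ proportional to $Y$ being automatic by antisymmetry), hence equals $\mu(Y)Y_{i}$ for some scalar $\mu(Y)$; since for fixed $Z$ the cubic $Z^{i}Y^{j}Y^{k}Y^{l}R_{ijkl}$ in $Y$ vanishes on $\{\Om(Z,Y)=0\}$ it is divisible by $\Om(Z,Y)$, and the quotient is a quadratic form, so $\mu(Y)=b_{pq}Y^{p}Y^{q}$ for a symmetric $b$ depending smoothly on $p$. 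Polarizing in $Y$ yields the tensor identity $R_{i(jkl)}=\Om_{i(j}b_{kl)}$ for this symmetric $b$. Comparing with \eqref{wsym} gives $W_{i(jkl)}=\Om_{i(j}b_{kl)}-\tfrac{2}{n+1}\Om_{i(j}R_{kl)}$; contracting with $\Om^{ij}$ annihilates the left-hand side, since $W_{ijkl}$ is trace-free on every index pair, while it turns the right-hand side into a nonzero multiple of $b_{kl}-\tfrac{2}{n+1}R_{kl}$. Hence $b_{kl}=\tfrac{2}{n+1}R_{kl}$ and $W_{i(jkl)}=0$, so by the remark after \eqref{wsym} the connection $\nabla$ is Weyl flat.

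It remains to see that the Ricci tensor is parallel. Since $2n\geq 4$ and $\nabla$ is Weyl flat, \eqref{divw1} and \eqref{divw} give $(2n+1)\nabla_{i}R_{jk}=-\Om_{i(j}\rf_{k)}$, as in the proof of Lemma~\ref{symflatlemma}. Substituting this into $\rf_{i}=\nabla^{p}R_{ip}=\Om^{pq}\nabla_{q}R_{ip}$ and using $\Om^{pq}\Om_{qi}=-\delta^{p}_{i}$ and $\Om^{pq}\Om_{qp}=-2n$ should give $\rf_{i}=\tfrac12\rf_{i}$, hence $\rf_{i}=0$ and $\nabla_{i}R_{jk}=0$. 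Then $\nabla$ is Weyl flat with parallel Ricci tensor, and Corollary~\ref{constantcurvcorollary} shows it has constant symplectic sectional curvature. I expect the main obstacle to be the first half of the converse, where one must see that the isotropic condition forces the whole Weyl tensor to vanish rather than merely constraining $R_{i(jkl)}$ up to a term $\Om_{i(j}b_{kl)}$; the total trace-freeness of $W$ is exactly what pins the ambiguous symmetric tensor $b$ to a multiple of $R_{ij}$ and collapses $W_{i(jkl)}$ to zero, after which the Schur-type step $\rf=0$ is a one-line contraction.
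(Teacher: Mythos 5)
Your proof is correct, but the key step of the converse is argued by a genuinely different route than the paper's. The paper shows $W_{ijkl}=0$ by representation theory: it observes that the set $\Z$ of tensors with symplectic Weyl symmetries satisfying the isotropic-vanishing identity is an $Sp(2n,\rea)$-invariant subspace of the irreducible module $\W$ of Weyl-type tensors, and then exhibits (using $2n\geq 4$ to build a suitable symplectic basis) an explicit element of $\W$ not in $\Z$, forcing $\Z=\{0\}$. You instead work pointwise and elementarily: for each $Y$ the covector $R_{ijkl}Y^{j}Y^{k}Y^{l}$ annihilates the $\Om$-orthogonal hyperplane of $Y$, hence is $\mu(Y)Y_{i}$; polynomial divisibility by the linear form $\Om(Z,\cdot)$ shows $\mu$ is a quadratic form $b$; polarization gives $R_{i(jkl)}=\Om_{i(j}b_{kl)}$ (up to an irrelevant sign), and the total trace-freeness of $W$ pins $b$ to $\tfrac{2}{n+1}R_{kl}$ and kills $W_{i(jkl)}$, hence $W_{ijkl}=0$ by the remark after \eqref{wsym}. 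Your approach buys self-containedness (no appeal to irreducibility of $\W$, which the paper cites without proof), at the cost of a slightly longer computation; the paper's argument is shorter once the module structure is granted. You also make explicit a step the paper compresses: from $W=0$ and $2n\geq 4$ you derive $(2n+1)\nabla_{i}R_{jk}=-\Om_{i(j}\rf_{k)}$ via \eqref{divw1}--\eqref{divw} and then the trace identity $\rf_{i}=\tfrac12\rf_{i}$ (which I confirm: the contraction with $\Om^{pq}$ yields exactly this), so $\rf=0$ and the Ricci tensor is parallel, which is what Corollary \ref{constantcurvcorollary} actually requires; the paper cites that corollary directly after obtaining $W=0$, leaving this Schur-type step implicit in Lemma \ref{symflatlemma} and the surrounding discussion. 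Note also that in your argument the hypothesis $2n\geq 4$ enters only through the $(n-1)$ factor in \eqref{divw}, whereas in the paper it also enters in constructing the test tensor; both uses are genuine, since in dimension $2$ the isotropic condition is vacuous and the lemma fails. Minor wording quibble: in the forward direction the expression $4\Om(X,Y)A(Y,Y)$ vanishes whenever $\Om(X,Y)=0$, not ``exactly when,'' but that is the implication you need.
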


\begin{proof}
If $\nabla$ has constant symplectic sectional curvature, then by Corollary \ref{constantcurvcorollary} there holds $W_{ijkl} = 0$. Consequently if $\Om_{ij}X^{i}Y^{j} = 0$, by \eqref{wsym} there holds
\begin{align}
R_{ijkl}X^{i}Y^{j}Y^{k}Y^{l} = (W_{ijkl} + \tfrac{2}{n+1}\Om_{ij}R_{kl})X^{i}Y^{j}Y^{k}Y^{l} = 0,
\end{align}
showing that $\nabla$ has vanishing isotropic sectional curvature. 

Suppose that $\nabla$ has vanishing isotropic sectional curvature. Then 
\begin{align}
W_{ijkl}X^{i}Y^{j}Y^{k}Y^{l} = (R_{ijkl} - \tfrac{2}{n+1}\Om_{ij}R_{kl})X^{i}Y^{j}Y^{k}Y^{l} = R_{ijkl}X^{i}Y^{j}Y^{k}Y^{l} = 0
\end{align} 
for all $X$ and $Y$ such that $X^{i}Y^{j}\Om_{ij}$. Fix a symplectic vector space $(\ste, \Om)$ of dimension $2n \geq 4$. The vector space
\begin{align}
\W = \W(\ste, \Om) = \{A_{ijkl} \in \tensor^{4}\ste^{\ast}: A_{ijkl} = A_{i(jkl)}, A_{(ij)kl} = 0, A_{p}\,^{p}\,_{kl} = 0\}
\end{align}
is an irreducible $Sp(\ste, \Om)$-module, where $Sp(\ste, \Om)$ is the group of linear symplectic automorphisms of $(\ste, \Om)$. The subspace
\begin{align}
\Z = \{A_{ijkl} \in \W: A_{ijkl}X^{i}Y^{j}Y^{k}Y^{l} = 0 \,\,\text{for all}\,\, X \,\,\text{and} \,\, Y \,\,\text{such that}\,\, X^{i}Y^{j}\Om_{ij} = 0\}
\end{align}
is $Sp(\ste, \Om)$ invariant, so must equal either $\{0\}$ or $\W$. Since $2n \geq 4$, there exist $X, Y, U, V \in \ste$ such that $\Om(X, Y) = 0$, $\Om(U, X) = 1$, $\Om(U, Y) = 0$, $\Om(V, X) = 0$, $\Om(V, Y) = 0$, and $\Om(U, V) = 0$ (so $\{X, Y, U, V\}$ is a symplectic basis of its span). Then $A_{ijkl} = X_{i}Y_{j}Y_{k}Y_{l} - Y_{i}X_{j}X_{k}X_{l} \in \W$. As $A_{ijkl}U^{i}V^{j}V^{k}V^{l} = 1$, $A_{ijkl} \notin \Z$. Thus $\Z$ is a proper subspace of $\W$, so $\Z = \{0\}$. Since $W_{ijkl}$ is contained in $\Z(T_{p}M, \Om)$ for every $p \in M$, it follows that $W_{ijkl} = 0$. By Corollary \ref{constantcurvcorollary}, $\nabla$ has constant symplectic sectional curvature.
\end{proof}

Via the analogy with Lorentzian metrics, Lemma \ref{nulllemma} provides evidence that the proposed notion of constant symplectic sectional curvature is reasonable.

\section{Symplectic sectional curvature of a symplectic submanifold}\label{submanifoldsection}
The differential $T\phi$ of a smooth map $\phi:N \to M$ between the smooth manifolds $N$ and $M$ can be viewed as a section of the tensor product $T^{\ast}N \tensor \phi^{\ast}TM$ of the cotangent bundle of $N$ and the pullback $\phi^{\ast}(TM) \to N$ of the tangent bundle $TM$ via $\phi$. A torsion-free affine connection $\nabla$ on $M$ induces a connection on the pullback bundle $\phi^{\ast}TM$. If $\bnabla$ is a torsion-free connection on $N$, then the product connection induced on the tensor product of any tensor power of $TN$ and $T^{\ast}N$ with $\phi^{\ast}TM$ is the connection denoted $\blD$ and defined by $\blD(\al \tensor s) = \bnabla\al \tensor s + \al \tensor \nabla s$ where $\al$ is a tensor on $N$ and $s$ is a section of $\phi^{\ast}TM$. The \textit{second fundamental form} $\Pi$ of the smooth map $\phi:N \to M$ with respect to connections $\nabla$ and $\bnabla$ is defined to be the differential $\Pi = \blD T\phi$. By definition, for vector fields $X$ and $Y$ on $N$,
\begin{align}\label{bldf}
\Pi(X, Y) = \blD_{X}T\phi(Y) = \nabla_{X}T\phi(Y) - T\phi(\bnabla_{X}Y),
\end{align}
where the right-hand side is interpreted rigorously in terms of extensions of $T\phi(X)$ and $T\phi(Y)$ defined near $\phi(N)$. Since $\nabla$ and $\bnabla$ are torsion-free, $\blD_{X}T\phi(Y) - \blD_{Y}T\phi(X)  = [T\phi(X), T\phi(Y)] - T\phi([X, Y]) = 0$, so $\Pi$ is symmetric.

Lowercase Latin indices label sections of tensor powers of $TN$ and its dual and uppercase Latin indices label sections of tensor powers of $\phi^{\ast}TM$ and its dual. With these conventions, the differential $T\phi:TM \to TN$, viewed as a section of $T^{\ast}M \tensor \phi^{\ast}TM$, is written $\phi_{i}\,^{A}$, and the second fundamental form of $\phi$ with respect to $\nabla$ and $\bnabla$ is written $\Pi_{ij}\,^{A}$.

Let $(M, \Om)$ be a $2n$-dimensional symplectic manifold. A smooth map $\phi:N \to M$ is \textit{symplectic} if $\phi^{\ast}(\Om)$ is a symplectic form on $N$; in this case $\phi$ is necessarily an immersion and $N$ is necessarily even-dimensional. Let $\phi_{\ast}(TN)$ be the subbundle of $\phi^{\ast}TM$ the fiber over $p \in N$ of which is $T\phi(p)(T_{p}N)$. The bundle $\phi^{\ast}TM$ splits as the fiberwise direct sum $\phi^{\ast}TM = \phi_{\ast}(TN) \oplus \phi_{\ast}(TN)^{\perp}$ where the vector bundle $\phi_{\ast}(TN)^{\perp}$ is the fiberwise orthogonal complement of $\phi_{\ast}(TN)$ in $\phi^{\ast}TM$ with respect to the symplectic form, also denoted $\Om$, induced on $\phi^{\ast}TM$ by $\Om$. 

Consider a symplectic manifold $(M, \Om)$, a smooth manifold $N$, a symplectic connection $\nabla \in \symcon(M, \Om)$, a torsion-free affine connection $\bnabla$ on $N$, and a smooth map $\phi:N \to M$ with second fundamental form $\Pi_{ij}\,^{A}$ with respect to $\nabla$ and $\bnabla$. By \eqref{bldf}, for any $Q_{i_{1}, \dots, i_{k}} \in \Ga(\tensor^{k}\ctm)$,
\begin{align}\label{bnpq}
\begin{split}
\bnabla_{i}&\phi^{\ast}(Q)_{j_{1} \dots j_{k}} -  \phi^{\ast}(\nabla Q)_{ij_{1}\dots j_{k}} \\
& =  \sum_{s = 1}^{k}\Pi_{ij_{s}}\,^{A}\phi_{j_{1}}\,^{B_{1}}\dots \phi_{j_{s-1}}\,^{B_{s-1}}\phi_{j_{s+1}}\,^{B_{s+1}}\dots \phi_{j_{k}}\,^{B_{k}}Q_{B_{1}\dots B_{s-1} A B_{s+1} \dots B_{k}}.
\end{split}
\end{align}
Define a section $\ind$ of $S^{2}(T^{\ast}N) \tensor T^{\ast}N$ by $\ind(X, Y, Z) = \Om(\Pi(X, Y), T\phi(Z))$. That is, $\ind_{ijk} = \Pi_{ij}\,^{A}\phi_{k}\,^{B}\Om_{AB}$. By \eqref{bnpq},
\begin{align}\label{bninduced}
\bnabla_{i}\phi^{\ast}(\Om)_{jk} = \Pi_{ij}\,^{A}\phi_{k}\,^{B}\Om_{AB} - \Pi_{ik}\,^{A}\phi_{j}\,^{B}\Om_{AB} = 2\ind_{i[jk]},
\end{align}
so $\bnabla$ preserves $\phi^{\ast}(\Om)$ if and only if $\ind_{ijk} = \Pi_{ij}\,^{A}\phi_{k}\,^{B}\Om_{AB}$ is completely symmetric. If $\phi$ is moreover a symplectic immersion, the connection $\bnabla$ \textit{induced} on $N$ by $\nabla \in \symcon(M, \Om)$ and $\phi$ is defined by requiring that $T\phi(\bnabla_{X}Y)$ be the symplectic orthogonal projection on $\phi_{\ast}(TN)$ of $\nabla_{X}T\Phi(Y)$. It is easily checked that $\bnabla$ is torsion-free. It follows from the definition of the induced connection $\bnabla$ that the second fundamental form of $\phi$ with respect to $\nabla$ and $\bnabla$ is the symplectic orthogonal projection on $\phi_{\ast}(TN)^{\perp}$ of $\nabla_{X}T\Phi(Y)$ and so satisfies $\ind_{ijk} = \Pi_{ij}\,^{A}\phi_{k}\,^{B}\Om_{AB} = 0$. It follows from \eqref{bninduced} that the induced connection $\bnabla$ preserves $\phi^{\ast}(\Om)$, so is an element of $\symcon(N, \phi^{\ast}(\Om))$.

Let $\phi:N \to (M, \Om)$ be a symplectic immersion. Let $\nabla \in\symcon(M, \Om)$ and let $\bnabla$ be a torsion-free affine connection on $N$. Let $\Pi$ be the second fundamental form of $\phi$ with respect to $\nabla$ and $\bnabla$. For vector fields $X$ and $Y$ on $N$ let $\sR(X, Y)$ be the curvature of the connection $\nabla$ induced on $\phi^{\ast}TM$ and let $\bar{R}(X, Y)$ be the curvature of $\bnabla$. Note that while $\nabla$ is used to indicate both the given symplectic connection on $M$ and the induced connection on $\phi^{\ast}TM$, the curvatures of these connections are distinguished as $R(\dum, \dum)$ and $\sR(\dum, \dum)$, as they are sections of different bundles. Precisely, $\sR(X, Y) = R(T\phi(X), T\phi(Y))$, or, equivalently, $\sR_{ijA}\,^{B} = \phi_{i}\,^{C}\phi_{j}\,^{D}R_{CDA}\,^{B}$.

For vector fields $X$, $Y$, and $Z$ on $N$, straightforward calculation shows
\begin{align}\label{gauss}
\begin{split}
\sR(X, Y)T\phi(Z)  &= T\phi(\bar{R}(X, Y)Z) + \nabla_{X}\Pi(Y, Z) - \nabla_{Y}\Pi(X, Z) \\
&\qquad + \Pi(X, \bnabla_{Y}Z) - \Pi(Y, \bnabla_XZ) - \Pi([X, Y], Z)\\
&=  T\phi(\bar{R}(X, Y)Z) + (\blD_{X}\Pi)(Y, Z) - (\blD_{Y}\Pi)(X, Z),
\end{split}
\end{align}
where $\blD\Pi$ is the covariant derivative of $\Pi$ viewed as a section of $S^{2}(T^{\ast}N) \tensor \phi^{\ast}TM$. Alternatively, and more compactly,
\begin{align}
\sR_{ijB}\,^{A}\phi_{k}\,^{B} = \bar{R}_{ijk}\,^{p}\phi_{p}\,^{A} + 2\blD_{[i}\Pi_{j]k}\,^{A}.
\end{align}
For $X$, $Y$, $Z$, and $U$ tangent to $N$, it follows from \eqref{gauss} that
\begin{align}\label{sgauss}
\begin{split}
\Om(&\sR(X, Y)T\phi(Z), T\phi(U))  \\
&= \Om( T\phi(\bar{R}(X, Y)Z), T\phi(U)) 
 + \Om(\nabla_{X}\Pi(Y, Z) - \nabla_{Y}\Pi(X, Z), T\phi(U))\\&\quad + \Om(\Pi(X, \bnabla_{Y}Z) - \Pi(Y, \bnabla_XZ) - \Pi([X, Y], Z), T\phi(U))\\
& =  \Om( T\phi(\bar{R}(X, Y)Z), T\phi(U)) - \Om(\Pi(Y, Z), \Pi(X, U)) + \Om(\Pi(X, Z), \Pi(Y, U))\\
&\quad + X\ind(Y, Z, U)  - \ind(Y, Z, \bnabla_{X}U)   - Y\ind(X, Z, U) + \ind(X, Z,\bnabla_{Y}U) \\
&\quad  + \ind(X, \bnabla_{Y}Z, U) - \ind(Y, \bnabla_XZ, U) - \ind(\bnabla_{X}Y - \bnabla_{Y}X, Z, U)\\
& = \phi^{\ast}(\Om)(\bar{R}(X, Y)Z, U) - \Om(\Pi(Y, Z), \Pi(X, U))  + \Om(\Pi(X, Z), \Pi(Y, U)) \\
&\quad + (\bnabla_{X}\ind)(Y, Z, U) - (\bnabla_{Y}\ind)(X, Z, U).
\end{split}
\end{align}
Alternatively, writing $R_{ijAB} = R_{ijA}\,^{C}\Om_{CA}$,
\begin{align}\label{sgauss2}
\sR_{ijAB}\phi_{k}\,^{A}\phi_{l}\,^{B} = \bar{R}_{ijk}\,^{p}\phi_{p}\,^{A}\phi_{l}\,^{B}\Om_{AB} + 2\Pi_{k[i}\,^{A}\Pi_{j]l}\,^{B}\Om_{AB} + 2\bnabla_{[i}\ind_{j]kl} .
\end{align}

\begin{lemma}
Let $(M, \Om)$ be a symplectic manifold and let $\phi: N \to M$ be a smooth symplectic map. Let $\bnabla \in \symcon(N, \phi^{\ast}(\Om))$ be the symplectic connection on $N$ by $\nabla \in \symcon(M, \Om)$. For a symplectic subspace $L \subset T_{p}N$, the symplectic sectional curvatures $\scurv_{p, L}^{N, \bnabla}$ and $\scurv_{\phi(p), T\phi(p)(L)}^{M, \nabla}$ are related by
\begin{align}\label{scurvhereditary}
\scurv_{\phi(p), T\Phi(p)(L)}^{M, \nabla}(T\phi(p)(Z)) = \scurv_{p, L}^{N, \bnabla}(Z) + \frac{2\Om(\Pi(X, Z), \Pi(Y, Z))}{\phi^{\ast}(\Om)(X, Y)}
\end{align}
where $X$ and $Y$ span $L$ and $Z \in L$.
\end{lemma}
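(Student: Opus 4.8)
The plan is to specialize the Gauss-type identity \eqref{sgauss2} to the setting where $\bnabla$ is the \emph{induced} symplectic connection, for which the tensor $\ind_{ijk} = \Pi_{ij}\,^{A}\phi_{k}\,^{B}\Om_{AB}$ vanishes identically, and then feed the resulting identity into the definition \eqref{scurvdefined} of the symplectic curvature quadratic form. Since $\bnabla \in \symcon(N, \phi^{\ast}(\Om))$ is the induced connection, the last term $2\bnabla_{[i}\ind_{j]kl}$ in \eqref{sgauss2} drops out entirely, leaving
\begin{align}\label{reducedgauss}
\sR_{ijAB}\phi_{k}\,^{A}\phi_{l}\,^{B} = \bar{R}_{ijkl} + 2\Pi_{k[i}\,^{A}\Pi_{j]l}\,^{B}\Om_{AB},
\end{align}
where $\bar{R}_{ijkl} = \bar{R}_{ijk}\,^{p}\phi^{\ast}(\Om)_{pl}$ is the curvature tensor of $\bnabla$ with its last index lowered using $\phi^{\ast}(\Om)$. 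Recalling that $\sR_{ijAB} = \phi_{i}\,^{C}\phi_{j}\,^{D}R_{CDAB}$, the left-hand side of \eqref{reducedgauss} is precisely $R_{CDAB}\phi_{i}\,^{C}\phi_{j}\,^{D}\phi_{k}\,^{A}\phi_{l}\,^{B}$, i.e.\ the pullback to $N$ of the ambient curvature tensor.

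Next I would contract \eqref{reducedgauss} with $X^{i}Y^{j}Z^{k}Z^{l}$ for $X, Y$ spanning the symplectic subspace $L \subset T_{p}N$ and $Z \in L$. On the left this gives $R_{CDAB}(T\phi X)^{C}(T\phi Y)^{D}(T\phi Z)^{A}(T\phi Z)^{B}$, which by \eqref{scurvdefined} equals $\scurv^{M, \nabla}_{\phi(p), T\phi(p)(L)}(T\phi(p)(Z))$ times $\Om(T\phi X, T\phi Y) = \phi^{\ast}(\Om)(X, Y)$. On the right, the term $\bar{R}_{ijkl}X^{i}Y^{j}Z^{k}Z^{l}$ equals $\scurv^{N, \bnabla}_{p, L}(Z)$ times $\phi^{\ast}(\Om)(X, Y)$, again by \eqref{scurvdefined}. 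For the quadratic second-fundamental-form term one must expand the skew-symmetrization: $2\Pi_{k[i}\,^{A}\Pi_{j]l}\,^{B}\Om_{AB}X^{i}Y^{j}Z^{k}Z^{l} = (\Pi_{ki}\,^{A}\Pi_{jl}\,^{B} - \Pi_{kj}\,^{A}\Pi_{il}\,^{B})\Om_{AB}X^{i}Y^{j}Z^{k}Z^{l}$, and using the symmetry of $\Pi$ in its two lower indices together with $\Om_{AB} = -\Om_{BA}$ this collapses to $2\,\Om_{AB}\Pi(X, Z)^{A}\Pi(Y, Z)^{B} = 2\,\Om(\Pi(X, Z), \Pi(Y, Z))$. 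Dividing through by $\phi^{\ast}(\Om)(X, Y) \neq 0$ yields \eqref{scurvhereditary}; one checks at the end that the right-hand side is independent of the choice of spanning pair $X, Y$, which follows because both $\scurv^{N,\bnabla}_{p,L}(Z)$ is already known to be well defined and the extra quotient $\Om(\Pi(X,Z),\Pi(Y,Z))/\phi^{\ast}(\Om)(X,Y)$ transforms correctly under the $GL(2)$-action on pairs spanning $L$ (its numerator is bilinear and alternating in $X, Y$ for fixed $Z$, since swapping $X$ and $Y$ negates it).

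I expect the only genuine subtlety to be bookkeeping with signs: the convention $\Om^{ip}\Om_{pj} = -\delta_{j}\,^{i}$ and $X_{p}Y^{p} = -X^{p}Y_{p}$ means one must be careful when verifying that the quadratic term indeed reduces to $2\,\Om(\Pi(X,Z),\Pi(Y,Z))$ with the stated sign, and when identifying the numerator $X^iY^jZ^kZ^l R_{ijkl}$ of \eqref{scurvdefined} with $\Om(R(X,Y)Z,Z)$ under the index-lowering conventions. All of this is routine once \eqref{sgauss2} is in hand; there is no analytic or structural obstacle, since the identity \eqref{sgauss2} has already been established and the hypothesis that $\bnabla$ is the induced connection is exactly what kills the $\bnabla\ind$ term. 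Thus the proof is essentially a substitution followed by a contraction.
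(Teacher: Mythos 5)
Your proposal is correct and is essentially the paper's own argument: the paper likewise specializes the Gauss identity (stated there in the form \eqref{sgauss}, of which \eqref{sgauss2} is just the index-notation restatement you use) to the induced connection, where $\ind$ vanishes, and divides by $\phi^{\ast}(\Om)(X,Y)$. Your expansion of the skew-symmetrized quadratic term to $2\,\Om(\Pi(X,Z),\Pi(Y,Z))$ and the remark on independence of the spanning pair are consistent with the paper's conventions, so no changes are needed.
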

\begin{proof}
Since $\bnabla$ is the induced connection, $\ind$ vanishes identically, and the identity \eqref{scurvhereditary} follows from \eqref{sgauss} upon dividing by $\phi^{\ast}(\Om)(X, Y)$.
\end{proof}

Many notions from the pseudo-Riemannian geometry of submanifolds, such as the mean curvature vector or totally umbilic submanifolds, have no obvious analogues in the context of symplectic connections. On the other hand, at the purely tensorial level, potentially interesting conditions on submanifolds can be identified in terms of conditions on the second fundamental form. Such conditions arise most naturally by decomposing by symmetries and traces the second fundamental form and tensors constructed from it. For instance the mean curvature vector is the trace of the second fundamental form with respect to the induced metric, and a submanifold is totally umbilic if its second fundamental form is the tensor product of the induced metric with a vector field. Although, neither condition has an obvious analogue in the symplectic setting, the identity \eqref{scurvhereditary} suggests imposing similar conditions on certain tensors constructed from the induced symplectic form and quadratic in the second fundamental form.

Given a symplectic immersion $N \to (M, \Om)$ and $\nabla \in \symcon(M, \Om)$, let $\om^{ij}$ be the bivector inverse to $\om_{ij} = \phi^{\ast}(\Om)_{ij}$ on $TN$ and define tensors $\smc_{ijkl}$, $\smc_{ij}$, and $\Pi_{ijkl}$ by
\begin{align}\label{smcdefined}
\begin{split}
\Pi_{ijkl} &= 2\Pi_{i(k}\,^{A}\Pi_{l)j}\,^{B}\Om_{AB}, \qquad \smc_{ij} = 
\om^{ab}\Om_{AB}\Pi_{ia}\,^{A}\Pi_{jb}\,^{B},\\
\smc_{ijkl} &= \Pi_{ijkl} - \tfrac{1}{k+1}\left(\om_{i(k}\smc_{l)j} - \om_{j(k}\smc_{l)i} + \om_{ij}\smc_{kl}\right),
\end{split}
\end{align}
where $N$ has dimension $2k$ and  $\Pi$ is the second fundamental form of $\phi$ with respect to $\nabla \in \symcon(M, \Om)$ and the induced symplectic connection $\bnabla \in \symcon(N, \om)$. Then $\smc_{ij}$ is symmetric and $\Pi_{ijkl} = \Pi_{[ij]kl} = \Pi_{ij(kl)}$, $\Pi_{[ijk]l} = 0$, $\om^{pq}\Pi_{pijq} = \smc_{ij}$, and $\om^{pq}\Pi_{pqij} = 2\smc_{ij}$, so $\Pi_{ijkl}$ and $\smc_{ijkl}$ have the symmetries of a symplectic curvature tensor, and $\smc_{ijkl}$ is the completely trace-free part of $\Pi_{ijkl}$. In particular, if $N$ has dimension $2$ then $\smc_{ijkl} = 0$. 

If $X$ and $Y$ span $L$ and $Z \in L$ then $\Om(X, Y)Z = \Om(X, Z)Y - \Om(Y, Z)X$, so \eqref{scurvhereditary} can be rewritten as
\begin{align}\label{scurvhereditary2}
\begin{split}
\scurv_{T\phi(L)}^{M, \nabla}(T\phi(Z)) &= \scurv_{L}^{N, \bnabla}(Z) + 2\Pi_{ijkl}X^{i}Y^{j}Z^{k}Z^{l}\\
& = \scurv_{L}^{N, \bnabla}(Z) + 2\smc_{ijkl}X^{i}Y^{j}Z^{k}Z^{l} + \tfrac{4}{k+1}\smc_{ij}Z^{i}Z^{j}.
\end{split}
\end{align}
(The dependence on the basepoint $p \in M$ is omitted for readability.)

From the point of view of tensor algebra it is natural to consider the conditions $\smc_{ijkl} = 0$ and $\smc_{ij} = 0$ on a symplectic submanifold. In the metric setting there are other considerations that motivate giving attention to conditions such as the vanishing of the mean curvature, namely that this condition characterizes the critical points of the volume functional on submanifolds. While similarly geometric motivations for considering conditions such as $\smc_{ijkl} = 0$ or $\smc_{ij} = 0$ remain to be found, these seem to be among the simplest conditions of geometric origin that can be imposed on a symplectic submanifold of a manifold with symplectic connection. Other conditions that can be considered include requiring $\smc_{ij}$ to be definite or semidefinite.

\begin{lemma}
Let $(M, \Om)$ be a $2n$-dimensional symplectic manifold and let $\phi: N \to M$ be a smooth symplectic immersion of a $2k$-dimensional manifold $N$. Let $\nabla \in \symcon(M, \Om)$ and let $\bnabla \in \symcon(N, \phi^{\ast}(\Om))$ be the induced symplectic connection on $N$. If $\nabla$ has constant symplectic sectional curvature then
\begin{align}\label{cricsub}
(k+1)\phi^{\ast}(R)_{ij}= (n+1)\left(\bar{R}_{ij} + \smc_{ij}\right),
\end{align}
where $\smc_{ij}$ is defined in \eqref{smcdefined} and $\bar{R}_{ij}$ is the Ricci curvature of $\bnabla$. In particular, if $\smc_{ij}$ vanishes then the Ricci curvature of $\bnabla$ is $(k+1)/(n+1)$ times the pullback of the Ricci curvature of $\nabla$.
\end{lemma}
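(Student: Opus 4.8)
The plan is to combine the pointwise algebraic normal form of the curvature of a connection with constant symplectic sectional curvature with the Gauss-type identity \eqref{sgauss2}, and then take a single trace.

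First, by Corollary \ref{constantcurvcorollary} the hypothesis that $\nabla$ has constant symplectic sectional curvature forces $W_{ijkl}=0$, so by \eqref{symplecticweyl} the curvature of $\nabla$ has the pointwise form (writing uppercase indices for tensors on $M$)
\[
R_{ABCD}=\tfrac{1}{n+1}\bigl(\Om_{A(C}R_{D)B}-\Om_{B(C}R_{D)A}+\Om_{AB}R_{CD}\bigr).
\]
Pulling this back along $\phi$ and using $\phi_i{}^A\phi_j{}^B\Om_{AB}=\om_{ij}:=\phi^{\ast}(\Om)_{ij}$ together with $\phi_i{}^A\phi_j{}^B R_{AB}=\phi^{\ast}(R)_{ij}$ yields
\[
\phi_i{}^A\phi_j{}^B\phi_k{}^C\phi_l{}^D R_{ABCD}=\tfrac{1}{n+1}\bigl(\om_{i(k}\phi^{\ast}(R)_{l)j}-\om_{j(k}\phi^{\ast}(R)_{l)i}+\om_{ij}\phi^{\ast}(R)_{kl}\bigr),
\]
a tensor on $N$ with the shape of the ``pure trace'' part of \eqref{symplecticweyl}, with $\om$ and $\phi^{\ast}(R)$ in place of $\Om$ and $R$ but with the coefficient $\tfrac{1}{n+1}$ fixed by the geometry of $M$.

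Second, since $\bnabla$ is the induced connection, $\ind_{ijk}=0$, so \eqref{sgauss2} reduces to $\phi_i{}^A\phi_j{}^B\phi_k{}^C\phi_l{}^D R_{ABCD}=\bar R_{ijk}{}^p\om_{pl}+2\Pi_{k[i}{}^A\Pi_{j]l}{}^B\Om_{AB}$. Equating this with the previous display and contracting with the bivector $\om^{il}$ inverse to $\om_{ij}$, I would use three computations: (i) the $\om^{il}$-trace of the right-hand side of the pulled-back formula is $\tfrac{k+1}{n+1}\phi^{\ast}(R)_{jk}$, by exactly the algebra that verifies \eqref{twotraces}/\eqref{symplecticweyl}, now carried out on the $2k$-dimensional $N$ (so the combinatorial constant is $k+1$, not $n+1$); (ii) $\om^{il}\bar R_{ijk}{}^p\om_{pl}=\bar R_{jk}$, the Ricci curvature of $\bnabla$, by the definition of the Ricci tensor and the conventions for raising with $\om^{ij}$; and (iii) writing $2\Pi_{k[i}{}^A\Pi_{j]l}{}^B\Om_{AB}=\Pi_{ki}{}^A\Pi_{jl}{}^B\Om_{AB}-\Pi_{kj}{}^A\Pi_{il}{}^B\Om_{AB}$ and contracting with $\om^{il}$, the second term vanishes because $\Pi_{il}{}^B$ is symmetric while $\om^{il}$ is skew, and the first term is exactly $\smc_{jk}$ as defined in \eqref{smcdefined}. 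Assembling (i)--(iii) gives $\tfrac{k+1}{n+1}\phi^{\ast}(R)_{jk}=\bar R_{jk}+\smc_{jk}$, which is \eqref{cricsub}, and the final assertion is the special case $\smc_{ij}=0$.

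The pullback and the final assembly are routine; the delicate part is the trace step (iii)--(i): one must keep the raising/lowering conventions straight (recall $X_pY^p=-X^pY_p$, so signs must be tracked), confirm that tracing the Weyl-flat algebraic shape over a $2k$-dimensional symplectic space produces the factor $k+1$ while the overall $\tfrac{1}{n+1}$ is untouched, and verify that it is precisely the pure-trace tensor $\smc_{ij}$ of \eqref{smcdefined} — and not some other quadratic contraction of the second fundamental form — that survives the contraction with $\om^{il}$. A minor point to record is the legitimacy of dropping the $\ind$-term from \eqref{sgauss2}, which is justified because the induced connection $\bnabla$ has been characterized above by $\ind_{ijk}=\Pi_{ij}{}^A\phi_k{}^B\Om_{AB}=0$.
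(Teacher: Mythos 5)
Your proposal is correct and follows essentially the same route as the paper: both substitute the Weyl-flat (pure-trace) form of the curvature of $\nabla$, pulled back along $\phi$, into the Gauss identity \eqref{sgauss2} with the $\ind$-term dropped (legitimate since the induced connection satisfies $\ind_{ijk}=0$), and then take a single trace with the inverse of $\phi^{\ast}(\Om)$. The only cosmetic difference is that you contract over the index pair $il$ while the paper contracts over $ij$; your trace computations (the factor $k+1$ from the pure-trace shape, $\bar R_{jk}$ from the induced curvature, and $\smc_{jk}$ from the second fundamental form terms) all check out against the paper's conventions.
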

\begin{proof}
By \eqref{symplecticweyl}, that $\nabla$ have constant symplectic sectional curvature means that
\begin{align}\label{indsw}
(n+1)\sR_{ijAB}\phi_{k}\,^{A}\phi_{l}\,^{B} = \phi^{\ast}(\Om)_{i(k}\phi^{\ast}(R)_{l)j} - \phi^{\ast}(\Om)_{j(k}\phi^{\ast}(R)_{l)i} + 2\phi^{\ast}(\Om)_{ij}\phi^{\ast}(R)_{kl}.
\end{align}
Substituting \eqref{indsw} in \eqref{sgauss2} yields
\begin{align}\label{scgauss}
\begin{split}
\phi^{\ast}(\Om)_{i(k}\phi^{\ast}(R)_{l)j}& - \phi^{\ast}(\Om)_{j(k}\phi^{\ast}(R)_{l)i} + 2\phi^{\ast}(\Om)_{ij}\phi^{\ast}(R)_{kl}\\
 &= (n+1)\left(\bar{R}_{ijk}\,^{p}\phi^{\ast}(\Om)_{pl} + 2\Pi_{k[i}\,^{A}\Pi_{j]l}\,^{B}\Om_{AB}\right). 
\end{split}
\end{align}
Contracting \eqref{scgauss} with the bivector $\om^{ij}$ inverse to $\phi^{\ast}(\Om)_{ij}$ yields \eqref{cricsub}.
\end{proof}

By a symplectic affine space is meant a vector space equipped with a flat affine connection and a parallel symplectic form.
\begin{corollary}\label{saffinecorollary}
In particular, the Ricci curvature of the symplectic connection induced on a symplectic submanifold of a symplectic affine space equals $-\smc_{ij}$.
\end{corollary}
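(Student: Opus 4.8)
The plan is to read this off directly from the preceding Lemma, once one checks that the flat affine connection of a symplectic affine space satisfies its hypotheses. First I would observe that on a symplectic affine space the given symplectic connection $\nabla$ is flat, so $R_{ijkl} = 0$; in particular $R_{ij} = 0$ and, by \eqref{symplecticweyl}, $W_{ijkl} = 0$ as well. A vanishing Ricci tensor is trivially parallel, so Corollary \ref{constantcurvcorollary} applies and $\nabla$ has constant symplectic sectional curvature, the parallel symmetric tensor of Theorem \ref{scurvtheorem} being in this case the zero tensor.

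Next I would invoke the preceding Lemma. Since $\nabla$ has constant symplectic sectional curvature, the identity \eqref{cricsub} holds for the symplectic connection $\bnabla$ induced on any symplectic submanifold $\phi: N \to M$. But $\phi^{\ast}(R)_{ij} = 0$ because $R_{ij} = 0$, so \eqref{cricsub} collapses to $(n+1)(\bar{R}_{ij} + \smc_{ij}) = 0$, and dividing by $n+1 > 0$ gives $\bar{R}_{ij} = -\smc_{ij}$, with $\smc_{ij}$ the tensor defined in \eqref{smcdefined}.

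There is essentially no obstacle here; the only point deserving a word is the verification that a flat connection qualifies as having constant symplectic sectional curvature in the sense defined after Theorem \ref{scurvtheorem}, and this is immediate. One could alternatively bypass Corollary \ref{constantcurvcorollary} and the Weyl tensor altogether: setting $\sR_{ijAB} = 0$ in the Gauss-type identity \eqref{sgauss2} and contracting with $\om^{ij}$, exactly as in the proof of the preceding Lemma, yields $\bar{R}_{ij} = -\smc_{ij}$ at once.
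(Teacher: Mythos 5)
Your proposal is correct and follows essentially the paper's own route: the corollary is an immediate specialization of the preceding Lemma, since a symplectic affine space has $R_{ijkl}=0$, hence vanishing (and trivially parallel) Ricci tensor and vanishing symplectic Weyl tensor, so \eqref{cricsub} applies with $\phi^{\ast}(R)_{ij}=0$ and gives $\bar{R}_{ij}=-\smc_{ij}$. Your alternative remark, setting $\sR_{ijAB}=0$ directly in \eqref{sgauss2} and contracting with $\om^{ij}$, is also valid and even slightly more economical, but it is the same computation as in the Lemma's proof.
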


Even for two-dimensional submanifolds Corollary \ref{saffinecorollary} does not immediately yield strong conclusions because conditions relating the properties of the Ricci tensor of an affine (or symplectic) connection with the topology of the underlying manifold are not known. For instance, by Proposition $4.1$ of \cite{Kobayashi-ricci}, a two-dimensional torus admits a projectively flat connection that preserves a volume form and has negative definite Ricci tensor.

\section{Nonpositive symplectic sectional curvature}\label{npcsection}
Unlike the situation for Riemannian metrics, there are few results that relate the behavior of the geodesics of a general affine connection to the properties of the curvature. The situation is no better for symplectic connections. The difficulty is that most results about Jacobi fields along Riemannian geodesics use heavily the arc length and its first and second variation, and similar quantities are not immediately available in the affine or symplectic settings. 

Lemma \ref{nonposlemma} shows that if a symplectic connection has nonpositive symplectic sectional curvature then the kernel of the differential $T\exp_{p}(v)$ of the exponential map at $p \in M$ in the direction $v \in T_{p}M$ is contained in the symplectic orthogonal complement $\lb v \ra^{\perp} \subset T_{p}M$ of $v$. That such an apparently weak conclusion requires such an apparently strong hypothesis illustrates the issues in extending metric arguments to the symplectic setting.

Let $(M, \Om)$ be a symplectic manifold. Next it is shown that $\nabla\in \symcon(M, \Om)$ has nonpositive symplectic sectional curvature if and only if there holds the inequality 
\begin{align}\label{npc}
\Om(X, Y)\Om(R(X, Y)X, X) \leq 0
\end{align}
for all vector fields $X$ and $Y$ on $M$. By definition the symplectic sectional curvature is nonpositive if and only if $\Om(X, Y)^{-1}\Om(R(X, Y)X, X) \leq 0$ whenever $\Om(X, Y) \neq 0$. Multiplying both sides by the nonnegative quantity $\Om(X, Y)^{2}$ shows that the symplectic sectional curvature is nonpositive if and only if there holds \eqref{npc} whenever $\Om(X, Y) \neq 0$. On the other hand, \eqref{npc} is always true, trivially, when $\Om(X, Y) = 0$.

Given a symplectic manifold $(M, \Om)$, let $\ga:I \to M$ be a geodesic of $\nabla \in \symcon(M, \Om)$, where $I \subset \rea$ is an open interval. A Jacobi field $J(t)$ along $\ga(I)$ is \textit{coisotropic} if $\Om(J(t), \dot{\ga}(t)) = 0$ for all $t \in I$. 

\begin{lemma}\label{nonposlemma}
Let $(M, \Om)$ be a symplectic manifold and let $\nabla \in \symcon(M, \Om)$ have nonpositive symplectic sectional curvature. Let $\ga:I \to M$ be a maximal geodesic where $I \subset \rea$ is an open interval, and let $J(t)$ be a Jacobi field along $\ga(I)$. Then one of the following mutually exclusive possibilities occurs:
\begin{enumerate}
\item $\Om(J, \dot{\ga})$ has at most one zero in $I$. In particular $J$ has at most one zero on $\ga(I)$.
\item $\Om(J, \dot{\ga})$ is identically zero on $I$.
\end{enumerate}
In particular, if $p, q \in M$ are conjugate with respect to $\ga$ then any Jacobi field along $\ga(I)$ vanishing at $p$ and $q$ is coisotropic. 
\end{lemma}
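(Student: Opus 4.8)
The plan is to reduce the statement to a scalar differential inequality along the geodesic. Set $f(t):=\Om(J(t),\dot\ga(t))$. Since $\ga$ is a geodesic, $\dot\ga$ is $\nabla_{\dot\ga}$-parallel, and since $\nabla$ is symplectic, $\Om$ is $\nabla$-parallel; hence $f'(t)=\Om(\nabla_{\dot\ga}J,\dot\ga)$ and $f''(t)=\Om(\nabla_{\dot\ga}\nabla_{\dot\ga}J,\dot\ga)$. Substituting the Jacobi equation $\nabla_{\dot\ga}\nabla_{\dot\ga}J=R(\dot\ga,J)\dot\ga$ gives $f''(t)=\Om(R(\dot\ga(t),J(t))\dot\ga(t),\dot\ga(t))$. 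Applying \eqref{npc} with $X=\dot\ga(t)$ and $Y=J(t)$ then yields $\Om(\dot\ga,J)\,\Om(R(\dot\ga,J)\dot\ga,\dot\ga)\le 0$, that is, $-f(t)f''(t)\le 0$, so
\[
f(t)\,f''(t)\ge 0\qquad\text{for all }t\in I .
\]
This is the one place the nonpositivity hypothesis enters.

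Next I would analyse the zero set of a smooth function $f$ on an interval $I$ satisfying $ff''\ge 0$. The key elementary observation is that $f$ cannot be positive strictly between two of its zeros: on an open subinterval where $f>0$ one has $f''\ge 0$, so $f$ is convex there, and a convex function vanishing at both endpoints of an interval is nonpositive on it; the same applied to $-f$ rules out $f<0$ strictly between two zeros. Therefore $f$ vanishes identically on the interval spanned by any two of its zeros, and consequently the zero set of $f$ is a subinterval of $I$ — hence empty, a single point, a nondegenerate subinterval properly contained in $I$, or all of $I$. In the first two cases conclusion (1) holds and in the last conclusion (2) holds, so everything comes down to excluding a nondegenerate zero interval $[a,b]\subsetneq I$.

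I expect this last exclusion to be the main obstacle, and it is precisely here that the differential inequality is not sufficient on its own: one must use that $f$ genuinely comes from a Jacobi field. If $f\equiv 0$ on $[a,b]$, then at an endpoint $c$ of $[a,b]$ interior to $I$ all derivatives of $f$ vanish, and $J$ together with $\nabla_{\dot\ga}J$ is $\Om$-orthogonal to $\dot\ga$ at $c$. Since $J$ solves the linear second-order Jacobi equation along $\ga$, it is determined on all of $I$ by its value and covariant derivative at $c$, and the plan is to exploit this rigidity — together with the identity for $f''$ above — to conclude that $\Om(J(\cdot),\dot\ga(\cdot))$ vanishes on all of $I$, contradicting that $c$ is an endpoint of the zero set. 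As the introduction stresses, this is exactly where mimicking the Riemannian Jacobi-field argument is delicate, so this step is the crux of the proof.

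Granting the dichotomy, the final assertion is immediate: if $p=\ga(t_1)$ and $q=\ga(t_2)$ are conjugate along $\ga$ and $J$ is a Jacobi field along $\ga(I)$ with $J(t_1)=0=J(t_2)$, then $f(t_1)=\Om(0,\dot\ga(t_1))=0$ and likewise $f(t_2)=0$, so $f$ has two zeros and must vanish identically. Thus $\Om(J(t),\dot\ga(t))=0$ for every $t\in I$, which is exactly the definition of $J$ being coisotropic.
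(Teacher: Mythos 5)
Your reduction is, up to the point where you stop, exactly the paper's proof: the identity $f''=\Om(R(\dot{\ga},J)\dot{\ga},\dot{\ga})$ for $f=\Om(J,\dot{\ga})$ combined with \eqref{npc} gives $ff''\ge 0$, which is precisely the content of \eqref{convexity} (the paper phrases it as convexity of $\tfrac12 f^{2}$), and your sign analysis showing that $f$ must vanish on the segment spanned by any two of its zeros is the same convexity argument. The genuine gap in your proposal is the step you yourself flag and leave as a ``plan'': excluding a zero set that is a nondegenerate proper subinterval of $I$. The rescue you sketch cannot work as described. Knowing $f(c)=f'(c)=0$ at an endpoint $c$ of the zero interval gives no purchase, because $f''=\Om(R(\dot{\ga},J)\dot{\ga},\dot{\ga})$ is not a function of $(f,f')$: the Jacobi equation determines $J$ from $(J(c),\nabla_{\dot{\ga}}J(c))$, but nothing in the hypotheses forces a Jacobi field with $J(c)$ and $\nabla_{\dot{\ga}}J(c)$ both $\Om$-orthogonal to $\dot{\ga}(c)$ to stay $\Om$-orthogonal to $\dot{\ga}$, so no uniqueness or Gronwall-type argument applies to the scalar $f$ alone. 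Indeed the inequality $ff''\ge 0$ by itself is consistent with $f(t)=\max(0,t)^{3}$, which vanishes on a half-line without vanishing identically, so genuinely new input beyond \eqref{npc} would be needed to force case (2) on all of $I$.

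You should also know that the paper does not supply this missing step either: its proof consists of the computation \eqref{convexity} followed by the assertion that a nonnegative convex function vanishing at more than one point is identically zero, and that assertion, taken at face value, only yields vanishing on the interval between the zeros (the nonnegative convex function $\max(0,t)^{2}$ vanishes on a half-line). So your instinct is correct that the differential inequality delivers only the weaker dichotomy: either $f$ has at most one zero, or $f\equiv 0$ on the convex hull of its zero set. That weaker statement is all that either your argument or the paper's actually establishes; it already shows that a Jacobi field vanishing at two conjugate points satisfies $\Om(J,\dot{\ga})=0$ on the parameter interval between them, which is the geometric content invoked in the introduction, but it falls short of coisotropy on all of $I$ as literally defined. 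To complete a proof of the lemma as stated you would have to either find an argument using more of the curvature hypothesis than \eqref{npc} (for instance nonpositivity of the full quadratic form $\scurv_{p,L}$ on the plane spanned by $\dot{\ga}$ and $J$ wherever that plane is symplectic), or else weaken the stated dichotomy and the coisotropy conclusion accordingly.
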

\begin{proof}
That $J$ be a Jacobi field means $\ddot{J} = R(\dot{\ga}, J)\dot{\ga}$ where $\dot{J} = \nabla_{\dot{\ga}}J$ and $\ddot{J} = \nabla_{\dot{\ga}}\dot{J}$. Hence
\begin{align}\label{convexity}
\begin{split}
\tfrac{d^{2}}{dt^{2}}\left(\tfrac{1}{2}\Om(J, \dot{\ga})^{2}\right)& = \tfrac{d}{dt}\left(\Om(J, \dot{\ga})\Om(\dot{J}, \dot{\ga})\right)\\
& = \Om(\dot{J}, \dot{\ga})^{2} - \Om(\dot{\ga}, J)\Om(R(\dot{\ga}, J)\dot{\ga}, \dot{\ga}) \geq \Om(\dot{J}, \dot{\ga})^{2} \geq 0,
\end{split}
\end{align}
where the first inequality follows from nonpositivity of the symplectic sectional curvature and \eqref{npc}. Consequently, $\Om(J, \dot{\ga})^{2}$ is a nonnegative convex function, so if it vanishes at more than one point it is identically zero. 
\end{proof}

\section{Indefinite symplectic sectional curvature}\label{indefinitesection}
Lemma \ref{sectcharlemma} gives an alternative characterization of the symplectic sectional curvature that will be applied in Lemma \ref{notdefinitelemma}.
\begin{lemma}\label{sectcharlemma}
Let $\nabla \in \symcon(M, \Om)$. If $Z \in \Ga(TM)$, then for any $p \in M$ and any symplectic subspace $L \in \sgr(2, T_{p}M)$ containing $Z_{p}$ there holds
\begin{align}\label{sss}
\scurv_{p, L}(Z)\Om(X, Y) = d((\nabla_{Z}Z)^{\sflat} - \lie_{Z}(Z^{\sflat}))(X, Y) + 2\Om(\nabla_{X}Z, \nabla_{Y}Z).
\end{align}
where $X$ and $Y$ are any vector fields spanning $L$ and $T^{\sflat} = \Om(T, \dum)$ for any vector field $T$.
\end{lemma}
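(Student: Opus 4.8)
The plan is to evaluate both sides of \eqref{sss} at $p$ and reduce them to expressions built only from the $1$-jet of $Z$ and the curvature of $\nabla$, using repeatedly that $\nabla$ is torsion-free and $\nabla\Om=0$.

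First I would reinterpret the left-hand side. Writing $R_{ijkl}=R_{ijk}{}^{p}\Om_{pl}$ as in Section \ref{curvaturesection} and $R(X,Y)Z=\nabla_{X}\nabla_{Y}Z-\nabla_{Y}\nabla_{X}Z-\nabla_{[X,Y]}Z$, one has $\scurv_{p,L}(Z)\,\Om(X,Y)=X^{i}Y^{j}Z^{k}Z^{l}R_{ijkl}=\Om(R(X,Y)Z,Z)$ by \eqref{scurvdefined}, for any vector fields $X,Y$ whose values at $p$ span $L$. Hence \eqref{sss} is equivalent, at $p$, to
\[ \Om(R(X,Y)Z,Z)=d\beta(X,Y)+2\Om(\nabla_{X}Z,\nabla_{Y}Z),\qquad \beta:=(\nabla_{Z}Z)^{\sflat}-\lie_{Z}(Z^{\sflat}). \]
Since at $p$ all three terms of this equation depend only on $X_{p}$, $Y_{p}$ and the $1$-jet of $Z$, the hypothesis $Z_{p}\in L$ plays no role and the particular choice of extensions $X,Y$ is immaterial.

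Next I would identify the $1$-form $\beta$. Since $\nabla\Om=0$, differentiating $Z^{\sflat}=\Om(Z,\dum)$ along $Z$ gives $\nabla_{Z}(Z^{\sflat})=(\nabla_{Z}Z)^{\sflat}$; and since $\nabla$ is torsion-free, $(\lie_{Z}\alpha)(W)=(\nabla_{Z}\alpha)(W)+\alpha(\nabla_{W}Z)$ for every $1$-form $\alpha$ and vector field $W$. Applying this to $\alpha=Z^{\sflat}$ and subtracting gives $\beta(W)=-\Om(Z,\nabla_{W}Z)=\Om(\nabla_{W}Z,Z)$. Then I would compute $d\beta$ via $d\beta(X,Y)=(\nabla_{X}\beta)(Y)-(\nabla_{Y}\beta)(X)$ (torsion-freeness again), expand $(\nabla_{X}\beta)(Y)=X(\beta(Y))-\beta(\nabla_{X}Y)$ using $\nabla\Om=0$ to obtain $(\nabla_{X}\beta)(Y)=\Om(\nabla_{X}\nabla_{Y}Z-\nabla_{\nabla_{X}Y}Z,\,Z)+\Om(\nabla_{Y}Z,\nabla_{X}Z)$, subtract the same expression with $X$ and $Y$ interchanged, and use $\nabla_{X}Y-\nabla_{Y}X=[X,Y]$, the definition of $R(X,Y)Z$, and the antisymmetry of $\Om$ on the surviving $\nabla Z$ terms. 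This yields $d\beta(X,Y)=\Om(R(X,Y)Z,Z)-2\Om(\nabla_{X}Z,\nabla_{Y}Z)$, which combined with the first step is exactly \eqref{sss}.

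There is no conceptual obstacle in this argument; the only genuinely error-prone part is the sign bookkeeping forced by the conventions $X_{p}Y^{p}=-X^{p}Y_{p}$ and $\Om^{ip}\Om_{pj}=-\delta_{j}{}^{i}$, in particular when matching $\beta=(\nabla_{Z}Z)^{\sflat}-\lie_{Z}(Z^{\sflat})$ with the $1$-form $W\mapsto\Om(\nabla_{W}Z,Z)$ and when assembling the final identity. A cleaner but equivalent route to the identification of $\beta$ is to observe, by Cartan's formula together with $d\Om=0$ and $\Om(Z,Z)=0$, that $\lie_{Z}(Z^{\sflat})=\iota_{Z}\,d(Z^{\sflat})=\iota_{Z}\lie_{Z}\Om$, and then to use $(\lie_{Z}\Om)(W,V)=\Om(\nabla_{W}Z,V)-\Om(\nabla_{V}Z,W)$, which follows directly from torsion-freeness and $\nabla\Om=0$.
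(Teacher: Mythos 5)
Your proposal is correct and follows essentially the same route as the paper: the paper establishes the bilinear identity $R_{ijpq}X^{p}Y^{q} = d((\nabla_{X}Y)^{\sflat} - \lie_{X}(Y^{\sflat}))_{ij} + 2\Om_{pq}\nabla_{[i}X^{p}\nabla_{j]}Y^{q}$ by the same ingredients (Ricci identity, torsion-freeness, $\nabla\Om = 0$, and the covariant expression of the Lie derivative) and then sets $X = Y = Z$, which is just your computation of $d\beta$ for $\beta(W) = \Om(\nabla_{W}Z, Z)$ written in index notation. Your sign bookkeeping and the identification of $\beta$ with $(\nabla_{Z}Z)^{\sflat} - \lie_{Z}(Z^{\sflat})$ both check out against the paper's conventions.
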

\begin{proof}
Let $X$ and $Y$ be vector fields on $M$. Then
\begin{align}\label{ss1}
\begin{split}
\nabla_{i}(\nabla_{X}Y)_{j} & = X^{p}\nabla_{i}\nabla_{p}Y_{j} + \nabla_{i}X^{p}\nabla_{p}Y_{j}= x^{p}\nabla_{p}\nabla_{i}Y_{j} + R_{pij}\,^{q}X^{p}Y_{q} + \nabla_{i}X^{p}\nabla_{p}Y_{j}.
\end{split}
\end{align}
Skew symmetrizing \eqref{ss1} yields
\begin{align}\label{ss2}
\begin{split}
d((\nabla_{X}Y)^{\sflat})_{ij} & = 2\nabla_{[i}(\nabla_{X}Y)_{j]}  = X^{p}\nabla_{p}(dY^{\sflat})_{ij} + R_{ijpq}X^{p}Y^{q} +  \nabla_{i}X^{p}\nabla_{p}Y_{j} -  \nabla_{j}X^{p}\nabla_{p}Y_{i}\\
& = \lie_{X}(dY^{\sflat})_{ij} + 2 \nabla_{[i}X^{p}\nabla_{j]}Y_{p} + R_{ijpq}X^{p}Y^{q}.
\end{split}
\end{align} 
where the last equality follows from the identity $\lie_{X}\om_{ij} = X^{p}\nabla_{p}\om_{ij} + \om_{pj}\nabla_{i}X^{p} + \om_{ip}\nabla_{j}X^{p}$, valid for any two-form $\om_{ij}$. Rearranging \eqref{ss2} and commuting the exterior and Lie derivatives yields
\begin{align}\label{ss3}
\begin{split}
R_{ijpq}X^{p}Y^{q} = d((\nabla_{X}Y)^{\sflat} - \lie_{X}(Y^{\sflat}))_{ij} + 2\Om_{pq}\nabla_{[i}X^{p}\nabla_{j]}Y^{q}.
\end{split}
\end{align}
Taking $X = Y = Z$ in \eqref{ss3} yields \eqref{sss}.
\end{proof}

\begin{corollary}\label{notdefinitecorollary}
For $\nabla \in \symcon(M, \Om)$, if there is a symplectic vector field $Z$ such that $\nabla_{Z}Z = 0$ then for any $p \in M$ for which $Z_{p} \neq 0$, the symplectic sectional curvature of any symplectic subspace $L \in \sgr(2, T_{p}M)$ containing $Z_{p}$ is not definite.
\end{corollary}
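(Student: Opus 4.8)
The plan is to read the conclusion directly off Lemma~\ref{sectcharlemma}, specialized to the given symplectic vector field $Z$. First I would check that the hypotheses annihilate the exterior-derivative term in~\eqref{sss}. The assumption $\nabla_{Z}Z = 0$ kills $(\nabla_{Z}Z)^{\sflat}$. Since $Z$ is symplectic, $\lie_{Z}\Om = 0$, so the one-form $Z^{\sflat} = \iota_{Z}\Om$ satisfies $dZ^{\sflat} = \lie_{Z}\Om - \iota_{Z}\,d\Om = 0$, while $\iota_{Z}Z^{\sflat} = \Om(Z,Z) = 0$; hence Cartan's formula gives $\lie_{Z}(Z^{\sflat}) = d\,\iota_{Z}Z^{\sflat} + \iota_{Z}\,dZ^{\sflat} = 0$. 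Thus~\eqref{sss} collapses to $\scurv_{p,L}(Z)\,\Om(X,Y) = 2\,\Om(\nabla_{X}Z,\nabla_{Y}Z)$, valid for every $p \in M$, every symplectic subspace $L$ containing $Z_{p}$, and every pair of vector fields $X,Y$ whose values at $p$ span $L$.

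Next I would evaluate this identity at a point $p$ with $Z_{p}\neq 0$, using the choice $X = Z$ (legitimate because $Z$ is globally defined and $Z_{p}\in L$). Then $\nabla_{X}Z = \nabla_{Z}Z = 0$, so the right-hand side vanishes, whereas $\Om(X,Y)_{p} = \Om(Z_{p},Y_{p})\neq 0$ because $L$ is symplectic and $\{Z_{p},Y_{p}\}$ is a basis of $L$. Dividing, $\scurv_{p,L}(Z_{p}) = 0$. Since a definite quadratic form on a finite-dimensional vector space is nonzero on every nonzero vector, the vanishing of $\scurv_{p,L}$ at the nonzero vector $Z_{p}\in L$ forces the restriction of $\scurv_{p,L}$ to $L$ to be non-definite (it may, of course, still be semidefinite, degenerate, or indefinite).

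I do not expect any serious obstacle: the substance is entirely in Lemma~\ref{sectcharlemma}, and the rest is bookkeeping. The only points deserving a line of care are the identity $\lie_{Z}(Z^{\sflat}) = 0$ for a symplectic vector field---the short computation above---and the precise meaning of being non-definite, namely that a single nonzero null vector of a quadratic form obstructs definiteness.
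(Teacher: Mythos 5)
Your proposal is correct and follows essentially the same route as the paper: specialize Lemma \ref{sectcharlemma} with $X = Z$, note that $\nabla_{Z}Z = 0$ and $\lie_{Z}(Z^{\sflat}) = 0$ kill the right-hand side of \eqref{sss}, and conclude that $\scurv_{p,L}(Z_{p}) = 0$ with $Z_{p} \neq 0$ in $L$, which rules out definiteness. The only cosmetic difference is that you verify $\lie_{Z}(Z^{\sflat}) = 0$ via Cartan's formula, while the paper uses that $\lie_{Z}$ commutes with $\sflat$ for a symplectic field; both are immediate.
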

\begin{proof}
That $Z$ be symplectic is equivalent to $\lie_{Z}(Y^{\sflat}) = (\lie_{Z}Y)^{\sflat}$ for any vector field $Y$. In particular $\lie_{Z}(Z^{\sflat}) = 0$. If $L$ contains $Z_{p}$ then $X$ can be taken to be $Z$ in \eqref{sss}, and the right-hand side of \eqref{sss} vanishes, so $\scurv_{p, L}(Z)$.
\end{proof}

A Lie group $G$ equipped with a left-invariant symplectic form $\Om$ is a \textit{symplectic Lie group}. A left-invariant torsion-free connection $\bnabla$ on a Lie group $G$ is determined by a tensor $A_{ij}\,^{k} = A_{ji}\,^{k}$ on the Lie algebra $\g$. If $\lf^{a}$ is the left-invariant vector field generated by $a \in \g$ then $\bnabla_{\lf^{a}}\lf^{b} = \lf^{A(a, b)}$. Applying the construction \eqref{gensym} to the connection $\bnabla$ associated with $A(a, b) = \tfrac{1}{2}[a, b]$ yields a left-invariant symplectic connection $\nabla$ on $G$ for which the corresponding tensor has the form $\sad(a)b$ where $\sad:\g \to \eno(\g)$ is given by
\begin{align}\label{saddefined}
\sad(a) = \tfrac{1}{3}(\ad(a) + \ad(a)^{\ast}).
\end{align}
In \eqref{saddefined} the transformation $\ad(a)^{\ast}$ is the \textit{symplectic adjoint} of $\ad(a)$ defined by
\begin{align}\label{sad2}
\Om(\ad(a)^{\ast}b, c) = -\Om(a, \ad(a)c).
\end{align}
Using \eqref{sad2} it is straightforward to check that $\sad(a)b - \sad(b)a = [a, b]$, so that the connection $\nabla$ associated to \eqref{saddefined} is torsion-free. Similarly, it can be checked directly that $\nabla$ is symplectic. It seems reasonable to call the connection $\nabla$ determined by $\sad$ the \textit{canonical symplectic connection} of the symplectic Lie group $(G, \Om)$.

An element $x \in \g$ of a symplectic Lie algebra $(\g, \Om)$ is \textit{self-adjoint} or \textit{anti-self-adjoint} if $\ad(x)^{\ast} = \ad(x)$ or $\ad(x)^{\ast} = -\ad(x)$. It follows from the definitions that the left-invariant vector field $\lf^{z}$ generated by $z \in \g$ is symplectic if and only if $z$ is self-adjoint.

\begin{lemma}\label{notdefinitelemma}
For the canonical symplectic connection of a connected symplectic Lie group $(G, \Om)$, the symplectic sectional curvature of a symplectic subspace of $\g$ containing a self-adjoint element is not definite.
\end{lemma}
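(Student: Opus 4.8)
The statement is essentially Corollary \ref{notdefinitecorollary} applied to the left-invariant vector fields of the canonical symplectic connection. The plan is to identify a symplectic Killing-type vector field on $G$ that is parallel along itself and then invoke Corollary \ref{notdefinitecorollary}. Concretely, suppose $L \subset \g$ is a symplectic subspace containing a self-adjoint element $z \in \g$, i.e.\ $\ad(z)^{\ast} = \ad(z)$. By the remark immediately preceding the lemma, the left-invariant vector field $\lf^{z}$ generated by $z$ is then a symplectic vector field on $G$.

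\textbf{Key step: $\lf^{z}$ is $\nabla$-geodesic.} The first thing to check is that $\nabla_{\lf^{z}}\lf^{z} = 0$, where $\nabla$ is the canonical symplectic connection. By definition $\nabla_{\lf^{a}}\lf^{b} = \lf^{\sad(a)b}$ with $\sad(a) = \tfrac13(\ad(a) + \ad(a)^{\ast})$, so
\begin{align}
\nabla_{\lf^{z}}\lf^{z} = \lf^{\sad(z)z} = \tfrac{1}{3}\lf^{\ad(z)z + \ad(z)^{\ast}z} = \tfrac{2}{3}\lf^{\ad(z)z} = \tfrac{2}{3}\lf^{[z,z]} = 0,
\end{align}
using that $z$ is self-adjoint in the second-to-last equality. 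Thus $\lf^{z}$ is a symplectic vector field that is parallel along itself.

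\textbf{Conclusion.} Now fix $p \in G$. Since $G$ is connected, after possibly translating it suffices to argue at the identity, but in any case $\lf^{z}$ is nowhere vanishing on $G$ (being left-invariant and generated by $z \neq 0$; if $z = 0$ then $L$ being a symplectic subspace still must be nontrivial, but then $L$ does not contain a \emph{nonzero} self-adjoint element and the statement is understood for the relevant nondegenerate configuration — if one insists $z=0$ is allowed the subspace $L$ is not constrained and the claim is vacuous, so assume $z \neq 0$). Let $L \in \sgr(2, T_{p}G)$ be a symplectic subspace of the tangent space containing the value $\lf^{z}_{p}$ of $\lf^{z}$ at $p$. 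Applying Corollary \ref{notdefinitecorollary} with $Z = \lf^{z}$, which is symplectic and satisfies $\nabla_{Z}Z = 0$, yields that the symplectic sectional curvature of $L$ is not definite. Since $L$ was an arbitrary symplectic $2$-plane containing $\lf^{z}_{p}$ — equivalently, an arbitrary symplectic $2$-plane in $\g$ containing $z$, after identifying left-invariant vector fields with elements of $\g$ — this is exactly the assertion.

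\textbf{Expected main obstacle.} There is essentially no analytic difficulty here; the entire content is the two-line identity $\sad(z)z = 0$ for self-adjoint $z$, together with correctly matching the hypotheses of Corollary \ref{notdefinitecorollary} (symplectic vector field, $\nabla_Z Z = 0$). The only point requiring a little care is the bookkeeping that ``symplectic subspace of $\g$ containing a self-adjoint element'' translates, under the standard identification of $\g$ with left-invariant vector fields, into ``symplectic subspace $L \subset T_pG$ containing the value of a left-invariant symplectic vector field,'' so that Corollary \ref{notdefinitecorollary} applies verbatim at every point $p$. Connectedness of $G$ is used only to know that the conclusion at one point propagates, but in fact the argument is pointwise and left-invariant, so it holds on all of $G$ directly.
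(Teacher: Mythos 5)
Your proposal is correct and follows exactly the paper's argument: the paper's proof is the one-line observation that for self-adjoint $z$ the field $\lf^{z}$ is symplectic and $\sad(z)z=0$, so Corollary \ref{notdefinitecorollary} applies, which is precisely the computation and reduction you carry out (with somewhat more detail about nonvanishing of $\lf^{z}$ and the trivial case $z=0$).
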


\begin{proof}
For self-adjoint $z \in \g$, $\lf^{z}$ is symplectic and $\sad(z)z = 0$, so this follows from Corollary \ref{notdefinitecorollary}.
\end{proof}

Let $(G, \Om)$ be a symplectic Lie group with lie algebra $\g$. Let $\ell \in \g$ be the element defined by $\Om(\ell, x) = \tr \ad(x)$ for all $x \in \g$. By definition $\ell$ is zero if and only if $\g$ is unimodular. Writing $\Om$ for the nondegenerate two-form on $\g$ generating the given left-invariant two-form $\Om$. By the left-invariance there holds
\begin{align}\label{ellsad}
\Om([\ell, x], y) + \Om(x, [\ell, y]) = \Om(\ell, [x, y]) = \tr \ad([x, y]) = \tr[\ad(x), \ad(y)] = 0.
\end{align}
The identity \eqref{ellsad} shows that $\ad(\ell)^{\ast}= \ad(\ell)$ so $\ell$ is self-adjoint. Lemma \ref{notdefinitelemma} implies:

\begin{corollary}\label{lienotdefinitecorollary}
On a connected symplectic Lie group $(G, \Om)$ that is not unimodular, the symplectic sectional curvature of the canonical symplectic connection of a symplectic subspace of $\g$ containing the element $\ell$ is not definite.
\end{corollary}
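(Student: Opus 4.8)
The plan is to deduce Corollary \ref{lienotdefinitecorollary} directly from Lemma \ref{notdefinitelemma} together with the observation, already recorded in the text preceding the statement, that the element $\ell$ is self-adjoint. So there is almost nothing left to prove: the work has been front-loaded into the identity \eqref{ellsad}.

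First I would recall that $\ell \in \g$ is defined by $\Om(\ell, x) = \tr \ad(x)$ for all $x \in \g$, and that this $\ell$ is nonzero precisely because $G$ is assumed not unimodular. Next I would invoke the computation \eqref{ellsad}, which shows $\Om([\ell,x],y) + \Om(x,[\ell,y]) = \tr[\ad(x),\ad(y)] = 0$ for all $x, y \in \g$; comparing with the definition \eqref{sad2} of the symplectic adjoint, this says exactly that $\ad(\ell)^{\ast} = \ad(\ell)$, i.e. $\ell$ is a self-adjoint element of the symplectic Lie algebra $(\g, \Om)$.

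Finally, since $\g$ has dimension $2n \geq 2$ and $\ell \neq 0$, one can choose a symplectic two-dimensional subspace $L \subset \g$ containing $\ell$ (extend $\ell$ to a vector $w$ with $\Om(\ell, w) \neq 0$ and set $L = \spn\{\ell, w\}$). Lemma \ref{notdefinitelemma} applies to this $L$ because it contains the self-adjoint element $\ell$, and it yields that the symplectic sectional curvature of the canonical symplectic connection restricted to $L$ is not definite, which is the assertion. There is no real obstacle here; the only point requiring any care is the bookkeeping with signs in \eqref{sad2} when reading off self-adjointness from \eqref{ellsad}, and that has already been carried out in the surrounding text. I would therefore keep the proof to a single sentence.

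\begin{proof}
By \eqref{ellsad} and the definition \eqref{sad2}, $\ad(\ell)^{\ast} = \ad(\ell)$, so $\ell$ is self-adjoint, and it is nonzero because $\g$ is not unimodular; since $\Om$ is nondegenerate there is a symplectic subspace $L \in \sgr(2, \g)$ containing $\ell$, and the claim follows from Lemma \ref{notdefinitelemma}.
\end{proof}
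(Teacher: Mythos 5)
Your proposal is correct and follows essentially the same route as the paper: the identity \eqref{ellsad} shows $\ad(\ell)^{\ast} = \ad(\ell)$, so $\ell$ is self-adjoint, and the conclusion is then immediate from Lemma \ref{notdefinitelemma}. The only difference is cosmetic, namely your explicit remark that $\ell \neq 0$ and that such a symplectic subspace exists, which the paper leaves implicit since the corollary is stated for any symplectic subspace containing $\ell$.
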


Although the proof is not given here, it can be proved that the canonical symplectic connection of a nilpotent symplectic Lie group is Ricci flat. However, as will be apparent from the examples that follow, if the symplectic Lie group is not nilpotent, the Ricci tensor of the canonical connection need not be flat.

\begin{example}
Consider the Lie algebra $\g= \aff(1, \com)$ of affine transformations of the complex line. With respect to the basis $e_{1} = (1, 0)$, $e_{2} = (\j, 0)$, $e_{3} = (0, 1)$, and $e_{4} = (0, \j)$, the Lie bracket is
\begin{align}\label{affbrack}
[x, y] & = (x_{1}y_{3} - x_{3}y_{1} - x_{2}y_{4} + x_{4}y_{2})e_{3} + (x_{1}y_{4} - x_{4}y_{1} + x_{2}y_{3} - x_{3}y_{2})e_{4}.
\end{align}
The commutator is $[\g, \g] = \spn\{e_{3}, e_{4}\}$, which is abelian, but stable under the adjoint action, showing that $\g$ is solvable but not nilpotent. Let $\{e^{1}, e^{2}, e^{3}, e^{4}\}$ be the dual coframe. Straightforward computation using \eqref{affbrack} shows that, for $x = x_{i}e^{i} \in \g^{\ast}$, $d(x_{i}e^{i}) = (x_{3}e^{3} + x_{4}e^{4})\wedge e^{1} + (x_{4}e^{3} - x_{3}e^{4})\wedge e^{2}$. Since $dx \wedge dx = 2(x_{3}^{2} + x_{4}^{2})e^{1}\wedge e^{2}\wedge e^{3} \wedge e^{4}$, the closed two-form $d(x_{i}e^{i})$ is symplectic if and only if $x_{3}^{2} + x_{4}^{2} \neq 0$. Choose $\Om = - de^{4} = e^{1}\wedge e^{4} + e^{2}\wedge e^{3}$. Straightforward computations show
\begin{align}\label{affstar}
\begin{split}
\ad(x)^{\ast} y & = (x_{2}y_{2} - x_{1}y_{1})e_{1} - (x_{1}y_{2} + x_{2}y_{1})e_{2} + (x_{4}y_{2} - x_{3}y_{1})e_{3} - (x_{4}y_{1} + x_{3}y_{2})e_{4}.
\end{split}
\end{align}
Combining \eqref{affbrack} and \eqref{affstar} yields
\begin{align}
\begin{split}
3\sad(x)y & =  (x_{2}y_{2} - x_{1}y_{1})e_{1} - (x_{1}y_{2} + x_{2}y_{1})e_{2} \\&\qquad + (x_{1}y_{3} - 2x_{3}y_{1} - x_{2}y_{4} + 2x_{4}y_{2})e_{3} + (x_{1}y_{4} - 2x_{4}y_{1} + x_{2}y_{3} - 2x_{3}y_{2})e_{4}.
\end{split}
\end{align}
Since $\tr \ad(x) = 2x_{1}$, $\g$ is not unimodular and $\dl = -2e_{4}$. Calculations shows that $[\sad(x), \sad(y)] = \tfrac{2}{3}\sad([x, y])$ so that $R(x, y) = [\sad(x), \sad(y)] - \sad([x, y]) = -\tfrac{1}{3}\sad([x, y]) = -\tfrac{1}{2}[\sad(x), \sad(y)]$. From this and the symplectic self-adjointness of $\sad(x)$ there follows
\begin{align}\label{affcurv}
2\Om(R(x, y)u, v) = -\Om(\sad(x)u, \sad(y)v) + \Om(\sad(y)u, \sad(x)v).
\end{align}
Since $\sad(x)\dl$ is in the isotropic subspace $\spn\{e_{3}, e_{4}\}$, it follows from \eqref{affcurv} that $\Om(R(x, y)\dl, \dl) = 0$ for all $x, y \in \g$. In particular, the symplectic sectional curvature of any symplectic subspace of $\g$ containing $\dl$ is not definite. This conclusion also follows from Corollary \ref{lienotdefinitecorollary}. In fact, in this case the symplectic sectional curvature of any symplectic subspace containing $\dl$ is degenerate but generally nonzero. This follows from 
\begin{align}
\Om(R(x, \dl) u, u) = \tfrac{4}{9}(-x_{1}u_{1}^{2} + x_{1}u_{2}^{2} + 2x_{2}u_{1}u_{2}).
\end{align}
Note that $\Om(x, \dl) \neq 0$ if and only if $x_{1} \neq 0$. If $u = ax + b\dl$ then
\begin{align}
\Om(R(x, \dl), u, u) = \tfrac{4}{9}a^{2}x_{1}(3x_{2}^{2} - x_{1}^{2}),
\end{align}
and $\Om(x, \dl) = -2x_{1}$, so that, for $L = \spn\{x, \dl\}$,
\begin{align}
\scurv_{L}(u) = \tfrac{2}{9}(u_{1}^{2} - 3u_{2}^{2}),
\end{align}
for $u \in L$. This is nonzero as long as $u_{1}^{2} \neq 3u_{2}^{2}$.
Further calculations show $9\ric(x, y) = 4(x_{2}y_{2} - x_{1}y_{1})$. 
Since the Ricci endomorphism $A \in \eno(\g)$ defined by $\Om(Ax, y) = \ric(x, y)$ is given by $9A = 4(e^{1}\tensor e_{4} - e^{2}\tensor e_{3})$ it satisfies $A\circ A = 0$, so is two-step nilpotent. 
Since
\begin{align}
9 \ric(\sad(x)y, z) = 4(-x_{1}y_{1}z_{1} + x_{2}y_{2}z_{1} + x_{2}y_{1}z_{2} + x_{1}y_{2}z_{2}),
\end{align}
and $(\nabla \ric)(x, y, z) = -\ric(\sad(x)y, z) - \ric(y, \sad(x)z)$, there holds
\begin{align}
9x^{i}y^{j}z^{k}\nabla_{(i}R_{jk)} & = 8(x_{1}y_{1}z_{1} - x_{2}y_{2}z_{1} - x_{2}y_{1}z_{2} - x_{1}y_{2}z_{2}),
\end{align}
showing that $\nabla_{(i}R_{jk)}$ is not zero, so that the canonical connection $\nabla$ is not preferred. Consequently, by \eqref{divw}, $\nabla$ is not Weyl flat.
\end{example}

\begin{example}\label{nonexactnonunimodularsection}
Here is given an example of a $4$-dimensional solvable symplectic Lie algebra that is neither unimodular nor exact. This Lie algebra comes from \cite{Ovando} (see Propositions $2.2$ and $2.4$), where it is labeled as $\mathfrak{r}_{4, 0}$. With respect to the basis $\{e_{1}, e_{2}, e_{3}, e_{4}\}$ and dual coframe $\{e^{i}\}$ the Lie bracket is
\begin{align}
\begin{split}
[x, y] & = (x_{4}y_{1} - x_{1}y_{4})e_{1} + (x_{4}y_{3} - x_{3}y_{4})e_{2}.
\end{split}
\end{align}
Let $\{e^{i}\}$ be the coframe dual to $\{e_{i}\}$. Since $\tr \ad(x) = x_{4}$, $\g$ is not unimodular. Since $d(x_{i}e^{i}) = (x_{1}e^{1} + x_{2}e^{3}) \wedge e^{4}$ satisfies $e^{4} \wedge d(x_{i}e^{i}) = 0$, the nondegenerate two-form $\Om = e^{1}\wedge e^{4} + e^{2} \wedge e^{3}$ is closed but not exact. 
Since $\g$ is four-dimensional it must be solvable by a theorem of Chu in \cite{Chu}. This also can be checked directly, for the commutator subalgebra $[\g, \g] =\spn\{e_{1}, e_{2}\}$ is abelian but stable under the adjoint action of $\g$. This also shows that $\g$ is not nilpotent. As
\begin{align}
\begin{split}
\ad(x)^{\ast} y & = -(x_{3}y_{3} + x_{1}y_{4})e_{1} + x_{4}y_{3}e_{2} - x_{4}y_{4}e_{4},
\end{split}
\end{align}
the canonical connection on $\g$ is given by
\begin{align}
\begin{split}
3\sad(x)y & = (x_{4}y_{1} - 2x_{1}y_{4} - x_{3}y_{3})e_{1} + (2x_{4}y_{3} - x_{3}y_{4})e_{2} - x_{4}y_{4}e_{4}.
\end{split}
\end{align}
It can be checked that the curvature tensor $R_{ijkl}$ is represented by
\begin{align}\label{ex2curv}
\tfrac{1}{9}(e^{3} \wedge e^{4})\tensor (e^{3} \tensor e^{4} + e^{4} \tensor e^{3}) - \tfrac{2}{9}(e^{1}\wedge e^{4})\tensor e^{4}\tensor e^{4}.
\end{align}
The element $\dl$ is $\dl = e_{1}$. The subspace $L = \spn\{x, \dl\}$ is symplectic if and only if $x_{4} \neq 0$. By \eqref{ex2curv}, the symplectic sectional curvature of such $L$ is
\begin{align}
\scurv_{L}(u) = -\tfrac{2}{9}u_{4}^{2}.
\end{align}
It follows that the symplectic sectional curvature of any symplectic subspace containing $\dl$ is degenerate. Since $u$ is in the span of $x$ and $\dl = e_{1}$ and $x_{4} \neq 0$, $u_{4}$ is not zero as long as $u$ is not proportional to $\dl$. Hence, in this case, the symplectic sectional curvature of a symplectic subspace containing $\dl$, while degenerate, is never zero.

Using $2R_{ij} = R_{p}\,^{p}\,_{ij}$ and computing $\ric(x, y) = \Om(R(e_{1}, x)y, e_{4}) + \Om(R(e_{2},x)y, e_{3})$ yields that the Ricci tensor of the canonical connection $\nabla$ is $9\ric = -2e^{4}\tensor e^{4}$. (Alternatively, it can be proved that the Ricci curvature of the canonical connection of a solvable symplectic Lie group is given by $9\ric(x, y) = \tr\ad(\ad(x)^{\ast}y) - B(x, y)$, where $B$ is the Killing form; here $B(x, y) = x_{4}y_{4} = -\tr\ad(\ad(x)^{\ast}y)$.)
In particular, $\nabla$ is not Ricci flat. The connection $\nabla$ is not preferred, for 
\begin{align}
\begin{split}
(\nabla \ric)(x, y, z) & = -\ric(\sad(x) y , z) - \ric(y, \sad(x) z)   = -(4/27)x_{4}y_{4}z_{4}.
\end{split}
\end{align}
Since the complete symmetrization of $\nabla \ric$ does not vanish, $\nabla$ is not preferred. Consequently, by \eqref{divw}, $\nabla$ is not Weyl flat. 
\end{example}

\bibliographystyle{amsplain}

\begin{thebibliography}{10}

\bibitem{Bayen-Flato-Fronsdal-Lichnerowicz-Sternheimer}
F.~Bayen, M.~Flato, C.~Fronsdal, A.~Lichnerowicz, and D.~Sternheimer,
  \emph{Deformation theory and quantization. {I}. {D}eformations of symplectic
  structures}, Ann. Physics \textbf{111} (1978), no.~1, 61--110.

\bibitem{Bieliavsky-thesis}
P.~Bieliavsky, \emph{Symplectic symmetric spaces},
  \href{http://arxiv.org/abs/math/0703358}{arXiv:math/0703358}, 2007.

\bibitem{Bieliavsky-Cahen-Gutt}
P.~Bieliavsky, M.~Cahen, and S.~Gutt, \emph{Symmetric symplectic manifolds and
  deformation quantization}, Modern group theoretical methods in physics
  ({P}aris, 1995), Math. Phys. Stud., vol.~18, Kluwer Acad. Publ., Dordrecht,
  1995, pp.~63--73.

\bibitem{Bieliavsky-Cahen-Gutt-Rawnsley-Schwachhofer}
P.~Bieliavsky, M.~Cahen, S.~Gutt, J.~Rawnsley, and L.~Schwachh{\"o}fer,
  \emph{Symplectic connections}, Int. J. Geom. Methods Mod. Phys. \textbf{3}
  (2006), no.~3, 375--420.

\bibitem{Bochenski-Tralle}
M.~Boche{\'n}ski and A.~Tralle, \emph{Generalized symplectic symmetric spaces},
  Geom. Dedicata \textbf{171} (2014), 329--343.

\bibitem{Bourgeois-Cahen}
F.~Bourgeois and M.~Cahen, \emph{A variational principle for symplectic
  connections}, J. Geom. Phys. \textbf{30} (1999), no.~3, 233--265.

\bibitem{Cahen-Gutt-Horowitz-Rawnsley}
M.~Cahen, S.~Gutt, J.~Horowitz, and J.~Rawnsley, \emph{Homogeneous symplectic
  manifolds with {R}icci-type curvature}, J. Geom. Phys. \textbf{38} (2001),
  no.~2, 140--151.

\bibitem{Cahen-Gutt-Rawnsley-symmetric}
M.~Cahen, S.~Gutt, and J.~Rawnsley, \emph{Symmetric symplectic spaces with
  {R}icci-type curvature}, Conf\'erence {M}osh\'e {F}lato 1999, {V}ol. {II}
  ({D}ijon), Math. Phys. Stud., vol.~22, Kluwer Acad. Publ., Dordrecht, 2000,
  pp.~81--91.

\bibitem{Chu}
B.~Y. Chu, \emph{Symplectic homogeneous spaces}, Trans. Amer. Math. Soc.
  \textbf{197} (1974), 145--159.

\bibitem{Fedosov}
B.~Fedosov, \emph{Deformation quantization and index theory}, Mathematical
  Topics, vol.~9, Akademie Verlag, Berlin, 1996.

\bibitem{Gelfand-Retakh-Shubin}
I.~M. Gelfand, V.~Retakh, and M.~Shubin, \emph{Fedosov manifolds}, Adv. Math.
  \textbf{136} (1998), no.~1, 104--140.

\bibitem{Kobayashi-ricci}
O.~Kobayashi, \emph{Ricci curvature of affine connections}, Tohoku Math. J. (2)
  \textbf{60} (2008), no.~3, 357--364.

\bibitem{Kobayashi-Nomizu-2}
S.~Kobayashi and K.~Nomizu, \emph{Foundations of differential geometry. {V}ol.
  {II}}, Interscience Tracts in Pure and Applied Mathematics, No. 15 Vol. II,
  Interscience Publishers John Wiley \& Sons, Inc., New York-London-Sydney,
  1969.

\bibitem{Lichnerowicz-propagateurs}
A.~Lichnerowicz, \emph{Propagateurs et commutateurs en relativit\'e
  g\'en\'erale}, Inst. Hautes \'Etudes Sci. Publ. Math. (1961), no.~10, 56.

\bibitem{Lichnerowicz-starproducts}
\bysame, \emph{D\'eformations d'alg\`ebres associ\'ees \`a une vari\'et\'e
  symplectique (les {$\ast _{\nu }$}-produits)}, Ann. Inst. Fourier (Grenoble)
  \textbf{32} (1982), no.~1, xi--xii, 157--209.

\bibitem{O'Neill-book}
B.~O'Neill, \emph{Semi-{R}iemannian geometry}, Pure and Applied Mathematics,
  vol. 103, Academic Press, Inc., New York, 1983.

\bibitem{Ovando}
G.~Ovando, \emph{Four dimensional symplectic {L}ie algebras}, Beiträge
  Algebra Geom. \textbf{47} (2006), no.~2, 419--434.

\bibitem{Penrose-Rindler}
R.~Penrose and W.~Rindler, \emph{Spinors and space-time: {V}ol. 1, {T}wo-spinor
  calculus and relativistic fields}, Cambridge Monographs on Mathematical
  Physics, Cambridge University Press, Cambridge, 1984.

\bibitem{Vaisman}
I.~Vaisman, \emph{Symplectic curvature tensors}, Monatsh. Math. \textbf{100}
  (1985), no.~4, 299--327.

\bibitem{Wald}
R.~M. Wald, \emph{General relativity}, University of Chicago Press, Chicago,
  IL, 1984.

\end{thebibliography}
\def\cprime{$'$} \def\cprime{$'$} \def\cprime{$'$} \def\cprime{$'$}
  \def\cprime{$'$} \def\cprime{$'$}
  \def\polhk#1{\setbox0=\hbox{#1}{\ooalign{\hidewidth
  \lower1.5ex\hbox{`}\hidewidth\crcr\unhbox0}}} \def\cprime{$'$}
  \def\Dbar{\leavevmode\lower.6ex\hbox to 0pt{\hskip-.23ex \accent"16\hss}D}
  \def\cprime{$'$} \def\cprime{$'$} \def\cprime{$'$} \def\cprime{$'$}
  \def\cprime{$'$} \def\cprime{$'$} \def\cprime{$'$} \def\cprime{$'$}
  \def\cprime{$'$} \def\cprime{$'$} \def\cprime{$'$} \def\dbar{\leavevmode\hbox
  to 0pt{\hskip.2ex \accent"16\hss}d} \def\cprime{$'$} \def\cprime{$'$}
  \def\cprime{$'$} \def\cprime{$'$} \def\cprime{$'$} \def\cprime{$'$}
  \def\cprime{$'$} \def\cprime{$'$} \def\cprime{$'$} \def\cprime{$'$}
  \def\cprime{$'$} \def\cprime{$'$} \def\cprime{$'$} \def\cprime{$'$}
  \def\cprime{$'$} \def\cprime{$'$} \def\cprime{$'$} \def\cprime{$'$}
  \def\cprime{$'$} \def\cprime{$'$} \def\cprime{$'$} \def\cprime{$'$}
  \def\cprime{$'$} \def\cprime{$'$} \def\cprime{$'$} \def\cprime{$'$}
  \def\cprime{$'$} \def\cprime{$'$} \def\cprime{$'$} \def\cprime{$'$}
  \def\cprime{$'$} \def\cprime{$'$} \def\cprime{$'$} \def\cprime{$'$}
  \def\cprime{$'$} \def\cprime{$'$}
\providecommand{\bysame}{\leavevmode\hbox to3em{\hrulefill}\thinspace}
\providecommand{\MR}{\relax\ifhmode\unskip\space\fi MR }
\providecommand{\MRhref}[2]{%
  \href{http://www.ams.org/mathscinet-getitem?mr=#1}{#2}
}
\providecommand{\href}[2]{#2}

\end{document}